
\documentclass[11pt]{article}

\setlength{\textwidth}{6in}
\setlength{\textheight}{8.75in}
\setlength{\oddsidemargin}{.3in}
\setlength{\topmargin}{-.33in}

\usepackage{amssymb, amsfonts, amsmath, amsthm}

\newtheorem{theorem}{Theorem}[section]
\newtheorem{lemma}[theorem]{Lemma}
\newtheorem{proposition}[theorem]{Proposition}

\theoremstyle{definition}
\newtheorem{definition}[theorem]{Definition}
\newtheorem{definition-and-remark}[theorem]{Definition and Remark}
\newtheorem{notation}[theorem]{Notation}
\newtheorem{notation-and-remark}[theorem]{Notation and Remark}
\newtheorem{remark}[theorem]{Remark}

\newcommand{\cA}{ {\cal A} }
\newcommand{\bC}{ {\mathbb C} }
\newcommand{\cF}{ {\cal F} }
\newcommand{\bN}{ {\mathbb N} }
\newcommand{\bR}{ {\mathbb R} }
\newcommand{\cP}{{\cal P}}
\newcommand{\cS}{{\cal S}}
\newcommand{\bZ}{ {\mathbb Z} }

\newcommand{\orb}{ \mbox{Orb} }
\newcommand{\term}{ \mbox{term} }

\newcommand{\mnoirr}{ {\underline{m}}^{(irr)} }
\newcommand{\firr}{ f_{\mathrm{irr}} }
\newcommand{\meone}{ {\underline{m}}^{(1)} }
\newcommand{\mek}{ {\underline{m}}^{(k)} }
\newcommand{\unitA}{ 1_{ { }_{\cA} } }

\begin{document}

$\ $

\begin{center}
{\bf\Large  Free probability aspect of irreducible meandric systems,}

\vspace{6pt}

{\bf\Large and some related observations about meanders} 

\vspace{20pt}

{\large Alexandru Nica, \footnote{Research supported 
by a Discovery Grant from NSERC, Canada.} }

{\large University of Waterloo }

\vspace{10pt}

\end{center}

\begin{abstract}

\noindent
We consider the concept of irreducible meandric system introduced by 
Lando and Zvonkin.  We place this concept in the lattice framework of 
$NC(n)$.  As a consequence, we show that the even generating 
function for irreducible meandric systems is the 

\noindent
$R$-transform of $\xi  \eta$, where $\xi$ and $\eta$ are 
classically (commuting) independent random variables, and 
each of $\xi, \eta$ has centred semicircular 
distribution of variance $1$.  Following this point 
of view, we make some observations about the symmetric linear 
functional on $\bC [X]$ which has $R$-transform given by the 
even generating function for meanders.
\end{abstract}

\vspace{6pt}

\begin{center}
{\bf\large 1. Introduction}
\end{center}
\setcounter{section}{1}

A {\em closed meandric system on $2n$ bridges} is a picture obtained by 
independently drawing two non-crossing pairings (a.k.a. 
``arch-diagrams'') of $\{ 1, \ldots , 2n \}$, one of them above 
and the other one below a horizontal line, as exemplified 
in Figure 1.  The combined arches of the two non-crossing pairings 
create a family of disjoint closed curves which wind up and down 
the horizontal line.  If this family consists of precisely 
one curve going through all the points $\{ 1, \ldots , 2n \}$, then 
the meandric system in question is called a {\em closed meander}.  

$\ $


\begin{center}
  \setlength{\unitlength}{0.3cm}
  \begin{picture}(16,9)
  \thicklines
  \put(-1,4){\line(1,0){16}}
  \put(1,4){\oval(2,2)[t]}
  \put(5,4){\oval(2,2)[b]}
  \put(5,4){\oval(6,6)[b]}
  \put(5,4){\oval(10,10)[b]}
  \put(9,4){\oval(2,2)[t]}
  \put(9,4){\oval(6,6)[t]}
  \put(9,4){\oval(10,10)[t]}
  \put(13,4){\oval(2,2)[b]}
  \end{picture}
\hspace{1cm}
  \begin{picture}(16,9)
  \thicklines
  \put(-1,4){\line(1,0){16}}
  \put(3,4){\oval(2,2)[t]}
  \put(3,4){\oval(6,6)[t]}
  \put(5,4){\oval(2,2)[b]}
  \put(5,4){\oval(6,6)[b]}
  \put(5,4){\oval(10,10)[b]}
  \put(11,4){\oval(2,2)[t]}
  \put(11,4){\oval(6,6)[t]}
  \put(13,4){\oval(2,2)[b]}
  \end{picture}

$\ $

$\ $

{\bf Figure 1.}  {\em Two closed meandric systems on $8$ bridges,}

{\em where one of them (on the right) is a closed meander.}
\end{center}

$\ $

Let $\meone_n$ denote the number of closed meanders
on $2n$ bridges.  Determining the asymptotic behaviour of the 
sequence $( \meone_n )_{n=1}^{\infty}$ is known to be a difficult 
problem -- see e.g. \cite{DGG1997}, or Section 3.4 of the monograph 
\cite{LZ2004}.  In particular, the constant
\begin{equation}   \label{eqn:1a}
c^{ (1) } := \limsup_{n \to \infty} 
\Bigl( \, \meone_n \, \Bigr)^{1/n} 
\end{equation}  
(reciprocal of radius of convergence for the generating
function of the $\meone_n$) is not known precisely.  
Numerical experimentation gives $c^{ (1) } \approx 12.26$.

In the paper \cite{LZ1992}, Lando and Zvonkin considered the concept 
\footnote{ Henceforth we will implicitly assume the adjective 
``closed'',   and we will just write ``meandric system''
and ``meander'' to mean ``closed meandric system'' and 
``closed meander'', respectively.}
of {\em irreducible meandric system} on $2n$ bridges.  Every meander is 
in particular an irreducible meandric system; hence the number 
$\mnoirr_n$ of irreducible meandric systems on $2n$ bridges is 
an upper bound for $\meone_n$, and the constant
\begin{equation}  
c^{ (irr) } := \limsup_{n \to \infty} 
\Bigl( \, \mnoirr_n \, \Bigr)^{1/n} 
\end{equation} 
is an upper bound for $c^{ (1) }$ of (\ref{eqn:1a}).  Interestingly 
enough, Lando and Zvonkin could determine $c^{ (irr) }$ precisely, 
namely
\begin{equation}   \label{eqn:1b}
c^{ (irr) } = \bigl( \pi / ( 4- \pi ) \bigr)^2 \approx 13.39
\end{equation} 
The equality (\ref{eqn:1b}) was obtained by finding a functional equation 
satisfied by the power series
\begin{equation}   \label{eqn:1c}
1 + \sum_{n=1}^{\infty} \, \mnoirr_n \, z^n,
\end{equation} 
which could then be used to determine the radius of convergence of the series.

In the present paper we place the concept of irreducible meandric 
system in the framework of lattice operations on $NC(n)$, the 
lattice of non-crossing partitions of $\{ 1, \ldots , n \}$.  This 
is done via a natural bijective correspondence (``the doubling 
construction'') between $NC(n)$ and the set of non-crossing 
pairings of $\{ 1, \ldots , 2n \}$, and leads to the following:

$\ $

\begin{theorem}    \label{thm:11}
For every $n \in \bN$, the number $\mnoirr_n$ of irreducible meandric 
systems on $2n$ bridges can be described as
\begin{equation}   \label{eqn:1d}
\mnoirr_n  \, = \ \vline \, \{ ( \pi , \rho ) \in NC(n)^{2} \mid 
\pi \vee \rho = 1_n \mbox{ and } \pi \wedge \rho = 0_n \} \ \vline \, ,
\end{equation} 
where ``$\vee$'' and ''$\wedge$'' are the join and respectively 
meet operations on $NC(n)$, while $0_n, 1_n$ are the minimal and 
respectively maximal element of $NC(n)$.
\end{theorem}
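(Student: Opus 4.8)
The plan is to prove~(\ref{eqn:1d}) by showing that, under the doubling construction, the irreducible meandric systems on $2n$ bridges correspond exactly to the pairs $(\pi,\rho)\in NC(n)^2$ with $\pi\vee\rho=1_n$ and $\pi\wedge\rho=0_n$. A meandric system on $2n$ bridges is literally an ordered pair of non-crossing pairings of $\{1,\dots,2n\}$, and the doubling construction $\Psi: NC(n)\to\{\text{non-crossing pairings of }\{1,\dots,2n\}\}$ is a bijection (one recovers the blocks of $\pi$ from $\Psi(\pi)$ by following the arches that $\Psi$ attaches to each block), so applying $\Psi^{-1}$ in each coordinate already gives a bijection between meandric systems on $2n$ bridges and $NC(n)^2$. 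Hence it suffices to prove the equivalence ``$(\Psi\pi,\Psi\rho)$ is irreducible $\iff$ $\pi\vee\rho=1_n$ and $\pi\wedge\rho=0_n$'', and~(\ref{eqn:1d}) follows by transporting counts across this bijection.

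I would first isolate two properties of $\Psi$. \emph{Interval factoring:} if a union of blocks of $\pi$ is an interval $\{a,a+1,\dots,b\}\subseteq\{1,\dots,n\}$, then $\Psi(\pi)$ restricted to the ``doubled'' interval $\{2a-1,2a,\dots,2b\}$ is a non-crossing pairing of those $2(b-a+1)$ points; and conversely, if $\Psi(\pi)$ restricted to an interval $I\subsetneq\{1,\dots,2n\}$ is a pairing of $I$, then $I$ must be of doubled form $\{2a-1,\dots,2b\}$ or of ``shifted'' form $\{2a,\dots,2b-1\}$ (any other interval has odd size, hence is not a union of arches), and in the doubled case the underlying $\{a,\dots,b\}$ is forced to be a union of blocks of $\pi$ (follow, within a block, the cyclic arch pattern that $\Psi$ assigns to it, staying inside $I$). \emph{Rotation versus Kreweras:} the one-step rotation $r: i\mapsto i+1$ of $\{1,\dots,2n\}$ intertwines $\Psi$ with Kreweras complementation, $\Psi(K\sigma)=r(\Psi\sigma)$. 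Both facts are elementary from the description of $\Psi$.

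Now bring in reducibility in the form we use (this is the Lando--Zvonkin notion): a meandric system $(p,q)$ on $2n$ bridges is reducible precisely when some proper non-empty interval $I\subsetneq\{1,\dots,2n\}$ has both $p|_I$ and $q|_I$ pairings of $I$ (a complementary co-interval has the same effect as the interval it complements, so genuine intervals suffice). Put $p=\Psi\pi$, $q=\Psi\rho$. By interval factoring, $(\Psi\pi,\Psi\rho)$ admits a reducing interval of doubled form iff some proper interval $S=\{a,\dots,b\}\subsetneq\{1,\dots,n\}$ is simultaneously a union of blocks of $\pi$ and of $\rho$; since such an $S$ — being an \emph{interval} — makes $1_{S}\sqcup 1_{\{1,\dots,n\}\setminus S}$ a non-crossing partition dominating both $\pi$ and $\rho$, and since every element of $NC(n)$ other than $1_n$ has a proper interval block (descend to an innermost block), this happens iff $\pi\vee\rho\ne 1_n$. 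By rotation versus Kreweras, $(\Psi\pi,\Psi\rho)$ admits a reducing interval of shifted form iff $(r^{-1}\Psi\pi,r^{-1}\Psi\rho)=(\Psi K^{-1}\pi,\Psi K^{-1}\rho)$ admits one of doubled form, i.e. iff $K^{-1}\pi\vee K^{-1}\rho\ne 1_n$; as $K$ is a lattice anti-automorphism of $NC(n)$ with $K(0_n)=1_n$, this reads $\pi\wedge\rho\ne 0_n$. Combining, $(\Psi\pi,\Psi\rho)$ is reducible iff $\pi\vee\rho\ne 1_n$ or $\pi\wedge\rho\ne 0_n$; taking contrapositives and using the coordinatewise bijection of the first paragraph gives~(\ref{eqn:1d}).

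The step I expect to be the main obstacle is the two ``conversely'' assertions inside interval factoring: proving that a reducing interval of $(\Psi\pi,\Psi\rho)$ is forced to be of doubled or shifted form, and that such an interval then pins down a proper interval block of $\pi\vee\rho$ (respectively of $K^{-1}\pi\vee K^{-1}\rho$). Here one must track the explicit arch pattern that $\Psi$ assigns to each block; a good sanity check that has to come out right is that a point which is a singleton block of both $\pi$ and $\rho$ — a ``trivial loop'' of the meandric system — does land in the reducible case, which it does because $1_{\{j\}}\sqcup 1_{\{1,\dots,n\}\setminus\{j\}}$ is non-crossing and forces a singleton of $\pi\vee\rho$. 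The remaining ingredients (the innermost-interval-block fact, the inequalities in $NC(n)$, and the Kreweras bookkeeping, including the wrap-around case when $r$ is applied to an interval containing the point $2n$) are routine.
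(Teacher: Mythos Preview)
Your proof is correct and reaches the same characterization as the paper, but the treatment of the ``shifted'' reducing intervals $\{2a,\ldots,2b-1\}$ is genuinely different. The paper handles these directly via a lemma showing that $\{2p,\ldots,2q-1\}$ is a union of arches of $A(\pi)$ if and only if $p$ and $q$ lie in the same block of $\pi$ (proved by following the arch pattern $2p_{i-1}\mapsto 2p_i-1$ along the block); from there, a shifted reducing interval forces $p,q$ to share a block of both $\pi$ and $\rho$, hence of $\pi\wedge\rho$, giving $\pi\wedge\rho\neq 0_n$. You instead invoke the rotation/Kreweras identity $\Psi(K\sigma)=r(\Psi\sigma)$ to transport the shifted case back to the doubled case for $(K^{-1}\pi,K^{-1}\rho)$, and then read off $\pi\wedge\rho\neq 0_n$ from the lattice anti-automorphism property of $K$. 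Your route is more conceptual and makes transparent \emph{why} the two obstructions (to $\pi\vee\rho=1_n$ and to $\pi\wedge\rho=0_n$) are dual to one another; the paper's route is more self-contained, needing no appeal to Kreweras complementation. The paper also records, as an intermediate equivalence, that irreducibility of $M_{\pi,\rho}$ is the same as $A(\pi)\vee A(\rho)=1_{2n}$ in $NC(2n)$; you bypass this, which is fine for Theorem~\ref{thm:11} itself, though the paper uses that intermediate statement later in the proof of Theorem~\ref{thm:12}.
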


$\ $

In connection to the above, it turns out that a close relative of 
the power series from (\ref{eqn:1c}) has a neat free probabilistic 
interpretation, as an $R$-transform (the counterpart in free 
probability for the concept of characteristic function of a 
random variable).  More precisely, denoting
\begin{equation}   \label{eqn:1e}
\firr (z) := \sum_{n=1}^{\infty} \mnoirr_n \, z^{2n},
\end{equation} 
one has the following:

$\ $

\begin{theorem}   \label{thm:12}
The series $\firr$ from (\ref{eqn:1e}) is the $R$-transform of the 
product $\xi  \eta$, where $\xi$ and $\eta$ are classically 
(commuting) independent random variables, and each of $\xi$ and 
$\eta$ has centred semicircular distribution of variance $1$.
\end{theorem}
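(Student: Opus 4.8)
The plan is to compute the moments of $\xi\eta$ combinatorially, identify them with a sum over pairs of non-crossing pairings, and then invert the moment-cumulant machinery to recognize $\firr$ as the $R$-transform. First I would recall that for a single centred semicircular element of variance $1$, the moments are the Catalan numbers: $\varphi(\xi^{2n}) = C_n = |NC_2(2n)|$, the number of non-crossing pairings of $\{1,\dots,2n\}$, with odd moments vanishing. Since $\xi$ and $\eta$ are classically independent and commuting, $\varphi\bigl((\xi\eta)^{2n}\bigr) = \varphi(\xi^{2n})\,\varphi(\eta^{2n}) = |NC_2(2n)|^2$; equivalently, via the doubling construction this equals $|NC(n)|^2$, the number of pairs $(\pi,\rho)\in NC(n)^2$. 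So the even moments of $\xi\eta$ count \emph{all} pairs of non-crossing partitions, while by Theorem 1.1 the coefficients of $\firr$ count the sub-collection of pairs satisfying $\pi\vee\rho=1_n$ and $\pi\wedge\rho=0_n$.

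The next step is to set up the right free-probabilistic identity linking these two counts. Let $\mu$ be the distribution of $\xi\eta$, let $(m_n)$ be its moments (so $m_{2n}=|NC(n)|^2$, $m_{2n-1}=0$), and let $(\kappa_n)$ be its free cumulants, characterized by the free moment-cumulant formula $m_n = \sum_{\sigma\in NC(n)} \prod_{V\in\sigma}\kappa_{|V|}$, with $R$-transform $\sum_{n\ge1}\kappa_n z^n$ (in the normalization used in the paper, where $\firr$ has no constant term and powers $z^{2n}$). Because all odd moments vanish and $\mu$ is symmetric, all odd cumulants vanish as well, so $R_\mu(z)=\sum_{n\ge1}\kappa_{2n}z^{2n}$. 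What must be shown is precisely $\kappa_{2n} = \mnoirr_n$. The cleanest route is to prove that the map sending a pair $(\pi,\rho)\in NC(n)^2$ to the pair of \emph{restrictions} to the blocks of $\pi\vee\rho$ gives a bijection
\[
NC(n)^2 \;\longleftrightarrow\; \bigsqcup_{\sigma\in NC(n)} \ \prod_{V\in\sigma}
\{(\pi_V,\rho_V)\in NC(V)^2 : \pi_V\vee\rho_V=1_V,\ \pi_V\wedge\rho_V=0_V\}.
\]
Counting both sides gives $|NC(n)|^2 = \sum_{\sigma\in NC(n)}\prod_{V\in\sigma}\mnoirr_{|V|}$, which is exactly the free moment-cumulant relation with $\kappa_{2\,|V|} = \mnoirr_{|V|}$ and all intermediate odd cumulants zero — establishing the theorem.

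To justify the displayed bijection I would argue as follows. Given $(\pi,\rho)$, set $\sigma:=\pi\vee\rho$; each block $V$ of $\sigma$ is a union of blocks of $\pi$ and of blocks of $\rho$, and non-crossingness is inherited, so $(\pi|_V,\rho|_V)\in NC(V)^2$ with $\pi|_V\vee\rho|_V = 1_V$ by construction. The condition $\pi|_V\wedge\rho|_V=0_V$ — i.e. irreducibility of the piece on $V$ — is the content that needs checking: if $\pi|_V$ and $\rho|_V$ shared a nontrivial common block $W$, then $W$ would be separated off inside the join as well, contradicting $\pi\vee\rho$ restricting to $1_V$; the careful point is that $\pi\wedge\rho$ on all of $\{1,\dots,n\}$ is the disjoint union of the $\pi|_V\wedge\rho|_V$, and since $\sigma=\pi\vee\rho\ge\pi\wedge\rho$ is automatic, one needs the finer fact that the join of two non-crossing partitions whose meet is a single block is $1_n$ only when that block is everything. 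Conversely, given $\sigma$ and irreducible pieces on each block, glueing them back produces a pair whose join is $\sigma$ (irreducibility prevents the join from being coarser than $\sigma$) and reproduces the original data. I expect this lattice-theoretic verification — specifically, showing that the "irreducible" pairs are exactly the indecomposable pieces of the pair-of-partitions structure, with the bookkeeping of $\vee$ and $\wedge$ under restriction — to be the main obstacle; everything else (semicircular moments, the doubling construction from Theorem 1.1, and the formal moment-cumulant inversion) is standard. The numerical check $\mnoirr_1=1$, giving $\kappa_2=1=\varphi((\xi\eta)^2)$, and $\firr$ beginning with $z^2$, is consistent with $\xi\eta$ having variance $1$.
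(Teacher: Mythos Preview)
Your proposed bijection is incorrect, and with it the displayed identity
\[
|NC(n)|^2 \;=\; \sum_{\sigma\in NC(n)}\prod_{V\in\sigma}\mnoirr_{|V|}
\]
is simply false. Take $n=2$: the left side is $4$, while the right side is $\mnoirr_1^2 + \mnoirr_2 = 1+2 = 3$. More concretely, if $\pi=\rho=1_n$ with $n\ge 2$, then $\sigma=\pi\vee\rho=1_n$ has the single block $V=\{1,\dots,n\}$, and $\pi|_V\wedge\rho|_V = 1_n \neq 0_n$; so your map does not even land in the stated codomain. The handwaving about ``if $\pi|_V$ and $\rho|_V$ shared a nontrivial common block $W$, then $W$ would be separated off inside the join'' is exactly where the argument breaks: a common block of $\pi$ and $\rho$ need not split off as a separate block of $\pi\vee\rho$.

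The underlying issue is that decomposing by the blocks of $\pi\vee\rho$ in $NC(n)$ captures only half of irreducibility (the condition $\pi\vee\rho=1_n$) and ignores the other half ($\pi\wedge\rho=0_n$). The correct decomposition of a meandric system into irreducible pieces takes place on $\{1,\dots,2n\}$, governed by the blocks of $A(\pi)\vee A(\rho)\in NC(2n)$; these blocks all have even size, so one gets the genuine moment--cumulant relation
\[
C_n^2 \;=\; \sum_{\tau\in NCE(2n)}\ \prod_{W\in\tau}\mnoirr_{|W|/2},
\]
which does match (check $n=2$: $2+1+1=4$). Proposition~\ref{prop:34} shows precisely that $A(\pi)\vee A(\rho)=1_{2n}$ encodes \emph{both} conditions $\pi\vee\rho=1_n$ and $\pi\wedge\rho=0_n$ simultaneously; you cannot see this from $\pi\vee\rho$ alone. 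The paper's proof avoids building this bijection by hand: it invokes the Biane--Dehornoy formula (Equation~(\ref{eqn:42c})) for free cumulants of a product of classically independent variables, which immediately gives $\kappa_{2n}(\xi\eta)=|\{(\sigma,\theta)\in NCP(2n)^2:\sigma\vee\theta=1_{2n}\}|$, and then appeals to the equivalence $(1)\Leftrightarrow(2)$ in Proposition~\ref{prop:34}.
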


$\ $

Theorem \ref{thm:12} can be obtained as a rather straightforward
application of a result of Biane and Dehornoy \cite{BD2014}.

We note that, in view of Theorem \ref{thm:12}, the functional 
equation found by Lando and Zvonkin (when written for the series 
$\firr$) becomes precisely the functional equation which is known 
to always be satisfied by the $R$-transform of a real random 
variable -- see e.g. the discussion on pages 269-270 of the 
monograph \cite{NS2006}.  Moreover, the calculation of radius 
of convergence made in \cite{LZ1992} suggests a method for 
determining, more generally, the radius of convergence for 
$R$-transforms of certain random variables with ``nice'' 
moment-generating functions.

Returning to the analogy between the sequences
$( \meone_n )_{n=1}^{\infty}$ and 
$( \mnoirr_n )_{n=1}^{\infty}$, it is then natural to
consider the power series $f_1$ which is analogous to 
$\firr$ from Equation (\ref{eqn:1e}), but has the meander number 
$\meone_n$ (instead of $\mnoirr_n$) as coefficient of $z^{2n}$.
Theorem \ref{thm:12} suggests that we write $f_1$ as an 
$R$-transform.  We can in any case do that on an algebraic level
-- that is, we can get $f_1$ as $R$-transform of a linear functional 
$\nu : \bC [X] \to \bC$ which is {\em defined} via the 
requirement that $R_{\nu} = f_1$.  The final section of the paper
is devoted to making some observations about this functional 
$\nu$: on the one hand we identify some sets of ``strictly 
non-crossing'' meandric systems which are counted by the even
moments of $\nu$, and on the other hand we observe that 
\[
\nu = \lim_{t \to 0} \nu_t^{\boxplus 1/t}
\ \ \mbox{ (limit in moments)}
\]
where $\boxplus$ refers to the operation of free additive 
convolution and $( \nu_t )_{t \in (0, \infty )}$ (defined 
precisely in Notation \ref{def:55} of the paper) is a family
of linear functionals of independent interest.

$\ $

Besides the present introduction, the paper has four other sections.
After a brief review of $NC(n)$ in Section 2, the proof of 
Theorem \ref{thm:11} is given in Section 3, then the proof and some 
comments around Theorem \ref{thm:12} are given in Section 4.  The 
final Section 5 presents the related observations about meanders
that were mentioned in the preceding paragraph.

$\ $

$\ $

\begin{center}
{\bf\large 2. Background on non-crossing partitions}
\end{center}
\setcounter{section}{2}
\setcounter{theorem}{0}
\setcounter{equation}{0}

In this section we do a brief review, mostly intended for setting the 
notations, of a few basic facts about the lattices of non-crossing 
partitions $NC(n)$.  For a more detailed discussion of this topic, we 
refer the reader to Lectures 9 and 10 of the monograph \cite{NS2006}.

$\ $

\begin{notation}   \label{def:21}
Let $n$ be a positive integer.  

$1^o$ We will work with partitions of the set
$\{ 1, \ldots , n \}$.  Our typical notation for such a 
partition is $\pi = \{ V_1, \ldots , V_k \}$, where 
$V_1, \ldots , V_k$ (the {\em blocks} of $\pi$) are non-empty, 
pairwise disjoint sets with 
$\cup_{i=1}^k V_i = \{ 1, \ldots , n \}$.
Occasionally, we will use the notation ``$V \in \pi$''
to mean that $V$ is one of the blocks of the partition $\pi$.
The number of blocks of $\pi$ is denoted as $| \pi |$.

\vspace{6pt}

$2^o$ We say that a partition $\pi$ of $\{ 1, \ldots ,n \}$ is 
{\em non-crossing} when it is not possible to find 
two distinct blocks $V, W \in \pi$ and numbers $a < b < c < d$ 
in $\{1, \ldots , n \}$ such that $a,c \in V$ and $b,d \in W$.  
This condition amounts precisely to the fact that one can draw
the blocks of $\pi$ without crossings in a picture of the kind
exemplified in Figure 2 below.

\vspace{6pt}

$3^o$ The set of all non-crossing partitions of 
$\{ 1, \ldots , n \}$ is denoted as $NC(n)$.  This is one of the 
many combinatorial structures counted by Catalan numbers 
-- indeed, it is not hard to verify that 
\[
| NC(n) | \,  = C_n := \frac{(2n)!}{n! (n+1)!}
\ \ \mbox{ ($n$-th Catalan number). }
\] 

\vspace{6pt}

$4^o$ On $NC(n)$ we will use the partial order given by 
{\em reverse refinement}: for $\pi, \rho$ we put
\begin{equation}   \label{eqn:21a}
\Bigl( \pi \leq \rho  \Bigr) \ \Leftrightarrow \ 
\left( \begin{array}{c}
\mbox{for every $V \in \pi$ there}  \\
\mbox{exists $W \in \rho$ such that $V \subseteq W$}
\end{array}   \right) .
\end{equation}  
We denote by $0_n$ the partition of $\{ 1, \ldots , n \}$
into $n$ blocks of $1$ element, and we denote by $1_n$ the
partition of $\{ 1, \ldots , n \}$ into $1$ block of $n$ elements. 
These are the minimum and respectively the maximum element in
$\bigl( \, NC(n), \leq \, \bigr)$ (one has 
$0_n \leq \pi \leq 1_n$ for every $\pi \in NC(n)$).
\end{notation}

$\ $

$\ $

$\ $

\begin{center}
  \setlength{\unitlength}{0.6cm}
  \begin{picture}(6,2)
  \thicklines
  \put(1,1){\line(0,1){2}}
  \put(1,1){\line(1,0){3}}
  \put(2,1){\line(0,1){2}}
  \put(3,2){\line(0,1){1}}
  \put(4,1){\line(0,1){2}}
  \put(5,1){\line(0,1){2}}
  \put(5,1){\line(1,0){1}}
  \put(6,1){\line(0,1){2}}
  \put(0.8,3.2){1}
  \put(1.8,3.2){2}
  \put(2.8,3.2){3}
  \put(3.8,3.2){4}
  \put(4.8,3.2){5}
  \put(5.8,3.2){6}
  \end{picture}

{\bf Figure 2.}  {\em Picture of the partition}

$\pi = \{ \, \{ 1,2,4 \}, \, \{ 3 \}, \, \{ 5,6 \} \, \}
\in NC(6)$.
\end{center}

$\ $

\begin{notation-and-remark}   \label{def:22}
{\em (Lattice properties of $(NC(n), \leq )$).}

\noindent
Let $n$ be a positive integer, and consider the partially ordered set
$(NC(n), \leq )$ from Notation \ref{def:21}.

\vspace{6pt}

$1^o$  The {\em meet} of $\pi, \rho \in NC(n)$ is the partition 
$\pi \wedge \rho$ of $\{ 1, \ldots , n \}$ defined as
\begin{equation}   \label{eqn:22a}
\pi \wedge \rho := \{ V \cap W \mid V \in \pi, W \in \rho, 
V \cap W \neq \emptyset \} .
\end{equation} 
It is easily verified that $\pi \wedge \rho$ belongs to $NC(n)$, 
and is uniquely determined by its properties that:
\[
\left\{  \begin{array}{cl}
\bullet  &  \pi \wedge \rho \leq \pi \mbox{ and } 
            \pi \wedge \rho \leq \rho ;                         \\
         &                                                      \\
\bullet  &  \mbox{If $\lambda \in NC(n)$ is such that 
                  $\lambda \leq \pi$ and $\lambda \leq \rho$, }  \\
         &  \mbox{then it follows that 
                  $\lambda \leq \pi \wedge \rho$. } 
\end{array}  \right.
\]

\vspace{6pt}

$2^{o}$ For every $\pi , \rho \in NC(n)$ there exists a partition 
$\pi \vee \rho \in NC(n)$, called the {\em join} of $\pi$ and $\rho$,
which is uniquely determined by its properties that:
\[
\left\{  \begin{array}{cl}
\bullet  &  \pi \vee \rho \geq \pi \mbox{ and } 
            \pi \vee \rho \geq \rho ;                           \\
         &                                                      \\
\bullet  &  \mbox{If $\lambda \in NC(n)$ is such that 
                   $\lambda \geq \pi$ and $\lambda \geq \rho$, }  \\
         &  \mbox{then it follows that 
                      $\lambda \geq \pi \vee \rho$. } 
\end{array}  \right.
\]
Unlike for $\pi \wedge \rho$, there is no simple explicit formula
describing the blocks of $\pi \vee \rho$.  (It is instructive to
check, for instance, that the join of 
$\{ \, \{ 1,3 \}, \, \{ 2 \}, \, \{ 4 \} \, \}$ and
$\{ \, \{ 1 \}, \, \{ 3 \}, \, \{ 2,4 \} \, \}$
in $NC(4)$ is the partition with one block $1_4$.)
\end{notation-and-remark}  

$\ $

\begin{notation}  \label{def:23}
{\em (Permutation associated to $\pi \in NC(n)$.)}

\noindent
Let $n$ be a positive integer and let
$\cS_n$ denote the group of permutations of $\{ 1, \ldots , n \}$.

\vspace{6pt}

$1^o$ For $\tau \in \cS_n$, we will use the notation $\orb ( \tau )$
for the partition of $\{ 1, \ldots , n \}$ into orbits of $\tau$ (thus
$i$ and $j$ are in the same block of $\orb ( \tau )$ if and only if
there exists $p \in \bN$ such that $\tau^p (i) = j$).  We denote
\[
\# ( \tau )  := \, | \orb ( \tau ) |
\ \ \mbox{ (number of orbits of the permutation $\tau$).}
\]

\vspace{6pt}

$2^{o}$ For $\pi \in NC(n)$ we will denote by $P_{\pi}$ the 
permutation in $\cS_n$ which has $\orb ( P_{\pi} ) = \pi$,
and performs an increasing cycle on every block of $\pi$:
if $V = \{ i_1, i_2, \ldots , i_k \} \in \pi$ with 
$i_1 < i_2 < \cdots < i_k$, then we have
$P_{\pi} (i_1) = i_2, \ldots , P_{\pi} (i_{k-1}) = i_k, \, 
P_{\pi} (i_k) = i_1$. 
\end{notation}

$\ $

\begin{notation-and-remark}   \label{def:24}
{\em (Non-crossing pairings and the doubling construction.)}

\noindent
Let $n$ be a positive integer.  We denote 
\[
NCP(2n) := \{  \sigma \in NC(2n) \mid 
\mbox{every block $W$ of $\sigma$ has $|W| = 2$} \}.
\]
The partitions in $NCP(2n)$ are called {\em non-crossing pairings},
or {\em arch-diagrams} on $2n$ points.  

It is not hard to verify that $| NCP(2n) | = C_n$, the $n$-th Catalan 
number.  Hence $NCP(2n)$ has precisely the same cardinality as $NC(n)$.
One has in fact a natural bijection 
\begin{equation}   \label{eqn:24a}
NC(n) \ni \pi \mapsto A( \pi ) \in NCP (2n), 
\end{equation}  
which goes essentially by ``doubling the points'' in the picture of 
$\pi$, and will therefore be called {\em the doubling construction}
(sometimes also referred to as  ``the fattening construction'').


\begin{center}
  \setlength{\unitlength}{0.4cm}
  \begin{picture}(25,13)
  \thicklines
  \put(-1,4){\line(1,0){24}}
  \put(3,4){\oval(2,2)[t]}
  \put(7,4){\oval(14,12)[t]}
  \put(9,4){\oval(2,2)[t]}
  \put(9,4){\oval(6,6)[t]}
  \put(19,4){\oval(2,2)[t]}
  \put(19,4){\oval(6,6)[t]}
  \put(0.0,2.8){1}
  \put(2.0,2.8){2}
  \put(4.0,2.8){3}
  \put(6.0,2.8){4}
  \put(8.0,2.8){5}
  \put(10.0,2.8){6}
  \put(12.0,2.8){7}
  \put(14.0,2.8){8}
  \put(16.0,2.8){9}
  \put(17.8,2.8){10}
  \put(19.8,2.8){11}
  \put(21.8,2.8){12}
  \end{picture}

{\bf Figure 3.}  {\em The arch-diagram $A( \pi ) \in NCP(12)$
obtained by performing the}

{\em doubling construction on the 
partition $\pi$ from Figure 2.  (For $1 \leq i \leq 6$, the}

{\em point $i$ in the picture of $\pi$ becomes the interval
$[ 2i-1, 2i ]$ in the picture of $A( \pi )$.)}
\end{center}

$\ $

Formally, the arch-diagram $A( \pi )$ can be introduced by 
indicating how the permutation $P_{A( \pi)} \in \cS_{2n}$ is
described in terms of the permutation $P_{\pi} \in \cS_n$.  
The formula doing this is:
\begin{equation}   \label{eqn:24b}
\left\{ \begin{array}{lcll}
P_{A ( \pi )} (2i)   & = & 2 P_{\pi} (i) -1,   &                  \\
P_{A ( \pi )} (2i-1) & = & 2 P_{\pi}^{-1} (i), & 1 \leq i \leq n.
\end{array}  \right.
\end{equation}
Indeed, it is easy to check that the assignment
\[
2i \mapsto  2 P_{\pi} (i) -1,  \  \ 
2i-1  \mapsto  2 P_{\pi}^{-1} (i), \mbox{ for } 1 \leq i \leq n,
\]
defines a permutation $\tau \in \cS_{2n}$ such that the orbit partition
$\orb ( \tau )$ is in $NCP(2n)$; thus it makes sense to define 
$A( \pi )$ as the unique arch-diagram having $P_{A( \pi )} = \tau$.

From (\ref{eqn:24b}) it is clear that $P_{\pi}$ can be retrieved from
$P_{A( \pi )}$.  This shows that the map $\pi \mapsto A( \pi )$ from
(\ref{eqn:24a}) is one-to-one (hence bijective, since
$| NC(n)| \, = \, | NCP(2n) | $).
\end{notation-and-remark} 

$\ $

$\ $

\begin{center}
{\bf\large 3. Meanders and irreducible meandric systems}
\end{center}
\setcounter{section}{3}
\setcounter{theorem}{0}
\setcounter{equation}{0}

\begin{definition}   \label{def:31}
Let $n$ be a positive integer, and let $\pi, \rho$ be in 
$NC(n)$.  

$1^o$ The {\em meandric system} associated 
to $\pi$ and $\rho$ is the permutation $M_{\pi, \rho} \in \cS_{2n}$ 
defined as follows:
\begin{equation}   \label{eqn:31a}
\left\{ \begin{array}{ccccll}
M_{\pi, \rho} (2i-1) & = &
P_{A ( \pi )} (2i-1) & = & 2 P_{\pi}^{-1} (i), &                   \\
M_{\pi, \rho} (2i)   & = &
P_{A ( \rho )} (2i)  & = & 2 P_{\rho} (i) -1,  & 1 \leq i \leq n. 
\end{array}  \right.
\end{equation}
The number of orbits $\# ( M_{\pi, \rho}$) is called 
{\em number of components} of the meandric system.  

\vspace{6pt}

$2^o$ We will say that $M_{\pi , \rho}$
{\em is a meander} to mean that $\# ( M_{\pi, \rho} ) = 1$. 

\vspace{6pt}

$3^o$  We will say that $M_{\pi , \rho}$ {\em is reducible} to 
mean that there exists a proper subinterval 
$J = \{ a, \ldots , b \} \subset \{1, \ldots , 2n \}$ (with 
$a \leq b$ in $\{ 1, \ldots , 2n \}$ having $b-a < 2n-1$) such 
that $J$ is invariant under the action of $M_{\pi , \rho}$.  
We will say that $M_{\pi , \rho}$ 
{\em is irreducible} to mean that it is not reducible. 
\end{definition}

$\ $

\begin{remark}   \label{rem:32}
$1^{o}$ Let $\pi , \rho$ be as in the preceding definition. 
We record here, for further use, the following immediate 
consequence of the definition of $M_{\pi , \rho}$: for a set
$S \subseteq \{ 1, \ldots , 2n \}$ one has that
\begin{equation}   \label{eqn:32a}
\left(  \begin{array}{c}
\mbox{$S$ is invariant}   \\
\mbox{for $M_{\pi, \rho}$}
\end{array}  \right) \ \Leftrightarrow \  
\left(  \begin{array}{c}
\mbox{$S$ is at the same time}   \\
\mbox{a union of blocks (pairs) of $A( \pi )$}  \\
\mbox{and a union of blocks of $A( \rho )$} 
\end{array}  \right) .
\end{equation} 
Note that (\ref{eqn:32a}) implies, in particular, that every 
set $S \subseteq \{ 1, \ldots , 2n \}$ which is invariant for 
$M_{\pi , \rho}$ must have even cardinality.

\vspace{6pt}

$2^{o}$ Recall from the introduction that for every $n \in \bN$ we have
denoted:
\begin{equation}   \label{eqn:32b}
\meone_n := \ \vline \ \{ ( \pi, \rho ) \in NC(n)^2 \mid
M_{\pi , \rho} \mbox{ is a meander } \} \ \vline \ ,
\end{equation}  
and
\begin{equation}   \label{eqn:32c}
\mnoirr_n := \ \vline \ \{ ( \pi, \rho ) \in NC(n)^2 \mid
M_{\pi , \rho} \mbox{ is irreducible } \} \ \vline \ .
\end{equation}  
It is clear that every meander is in particular an irreducible meandric 
system, but the converse is not true (for instance, the meandric system
depicted on the left side of Figure 1 is irreducible).  Hence 
$\mnoirr_n \geq \meone_n$, where the inequality is generally strict.
The smallest $n$ for which $\mnoirr_n > \meone_n$ is $n=4$ -- the 
reader may find it amusing to verify that there exist precisely 
4 irreducible meandric systems on 8 bridges which are not 
meanders, and this leads to $\mnoirr_4 = 46 = \meone_4 + 4$.
\end{remark} 

$\ $

\begin{lemma}   \label{lemma:3.4}
Let $n$ be a positive integer, let $\pi$ be a partition in $NC(n)$, 
and consider the corresponding arch-diagram $A( \pi ) \in NCP(2n)$.

$1^o$ For $1 \leq p \leq q \leq n$ one has that
\[
\left(  \begin{array}{c}
\mbox{$[ 2p-1 , 2q ] \cap \bZ$ is a}  \\
\mbox{union of blocks of $A( \pi )$}
\end{array}  \right) \ \Leftrightarrow \ 
\left(  \begin{array}{c}
\mbox{$[p , q] \cap \bZ$ is a}  \\
\mbox{union of blocks of $\pi$}
\end{array}  \right) .
\]

\vspace{6pt}

$2^o$ For $1 \leq p < q \leq n$ one has that
\[
\left(  \begin{array}{c}
\mbox{$[ 2p , 2q-1 ] \cap \bZ$ is a}  \\
\mbox{union of blocks of $A( \pi )$}
\end{array}  \right) \ \Leftrightarrow \ 
\left(  \begin{array}{c}
\mbox{$p$ and $q$ belong to}  \\
\mbox{the same block of $\pi$}
\end{array}  \right) .
\]
\end{lemma}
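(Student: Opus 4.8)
The plan is to work directly from the formula (\ref{eqn:24b}) that describes $P_{A(\pi)}$ in terms of $P_\pi$, unpacking what it means for a set of the form $[2p-1,2q]\cap\bZ$ or $[2p,2q-1]\cap\bZ$ to be invariant under $P_{A(\pi)}$ (equivalently, to be a union of blocks of $A(\pi)$, since the blocks of $A(\pi)$ are exactly the orbits of $P_{A(\pi)}$). The key book-keeping device is to track the parity of points: the even points $\{2,4,\ldots,2n\}$ and the odd points $\{1,3,\ldots,2n-1\}$ in the picture of $A(\pi)$ correspond respectively to the points $\{1,\ldots,n\}$ of $\pi$ via $2i\leftrightarrow i$ and $2i-1\leftrightarrow i$, and (\ref{eqn:24b}) says $P_{A(\pi)}$ sends even point $2i$ to odd point $2P_\pi(i)-1$ and odd point $2i-1$ to even point $2P_\pi^{-1}(i)$.

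\medskip

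For part $1^o$: a set $S=[2p-1,2q]\cap\bZ$ is a full run of consecutive integers starting at an odd point and ending at an even point, so it consists of the pairs $\{2i-1,2i\}$ for $p\le i\le q$; thus $S$ corresponds under the doubling to the set $S'=[p,q]\cap\bZ$ of points of $\pi$, and crucially $S$ is "parity-balanced" in the sense that it contains $2i-1$ if and only if it contains $2i$. I would show that for such a parity-balanced $S$, invariance of $S$ under $P_{A(\pi)}$ is equivalent to invariance of $S'$ under $P_\pi$: if $2i\in S$ then (using that $S$ is parity-balanced, so $2P_\pi(i)-1\in S\iff 2P_\pi(i)\in S$) we get $P_\pi(i)\in S'$, and running the argument through the odd points and through $P_\pi^{-1}$ gives the converse inclusions. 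Then invariance of $S'=[p,q]\cap\bZ$ under $P_\pi$ is, by definition of $P_\pi$ and of $\orb(P_\pi)=\pi$, exactly the statement that $[p,q]\cap\bZ$ is a union of blocks of $\pi$ — here one uses that $P_\pi$ performs a single cycle on each block, so a union of orbits is the same as a union of blocks. This settles $1^o$.

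\medskip

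For part $2^o$: now $S=[2p,2q-1]\cap\bZ$ starts at an even point and ends at an odd point, so it is the "shifted" run $\{2p,2p+1,\ldots,2q-2,2q-1\}$; grouping it as the pairs $\{2i,2i+1\}$ for $p\le i\le q-1$ we see $S$ contains $2i$ iff it contains $2i-1\ ... $ no: rather, writing $S=\{2p-1? \}$ — the clean way is that $S$ is invariant under $P_{A(\pi)}$ iff its complement is, and the complement $[1,2p-1]\cup[2q,2n]$ decomposes into one parity-balanced piece coming from $[1,p-1]\cup[q+1,n]$ plus the extra single points $\{2p-1\}$ and $\{2q\}$. Chasing (\ref{eqn:24b}) on $S$ itself: from $2p\in S$ we'd want $P_\pi(p)$ "inside", but the boundary is offset by one, and the arithmetic forces $P_\pi^{-1}$ applied to the lower endpoint to equal the upper endpoint — concretely, invariance pins down that $P_\pi$ restricted to $\{p,\ldots,q\}$ maps $p\mapsto q$ and $\{p+1,\ldots,q\}$ into itself, which by the cycle structure of $P_\pi$ says $p$ and $q$ lie in one block of $\pi$ (with all points strictly between "nested" under it). Conversely, if $p,q$ are in the same block $V$ of $\pi$, I would check that the doubled block $A$ of $A(\pi)$ containing $2p$ — together with the blocks nested below the arch from $p$ to $q$ — exactly fills $[2p,2q-1]\cap\bZ$, using the non-crossing property of $\pi$ to ensure no block straddles the endpoints.

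\medskip

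I expect the main obstacle to be part $2^o$: the off-by-one shift in the interval $[2p,2q-1]$ breaks the clean parity-balanced symmetry that makes $1^o$ almost automatic, so one must argue more carefully with the non-crossing structure, or else reduce $2^o$ to $1^o$ by a complementation trick (noting $S$ is invariant iff $\{1,\ldots,2n\}\setminus S=[1,2p-1]\cup[2q,2n]$ is, then handling the two pieces, which are again of the "good" type plus isolated endpoints). A possibly slicker route for both parts is to observe, from (\ref{eqn:24b}), that $P_{A(\pi)}^2$ acts on even points as $2i\mapsto 2P_\pi(i)$ and on odd points as $2i-1\mapsto 2P_\pi^{-1}(i)-1$ — i.e.\ $P_{A(\pi)}^2$ is a disjoint product of a copy of $P_\pi$ (on evens) and a copy of $P_\pi^{-1}$ (on odds) — and that the orbits of $P_{A(\pi)}$ are obtained by gluing an even-orbit to the corresponding odd-orbit through a single application of $P_{A(\pi)}$. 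This description makes transparent which intervals can be unions of orbits and should let both equivalences drop out with only routine endpoint checks.
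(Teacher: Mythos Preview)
Your treatment of part $1^o$ is correct and essentially identical to the paper's: both directions amount to checking invariance of the relevant intervals under $P_{A(\pi)}$ and $P_\pi$ by direct computation from (\ref{eqn:24b}), exploiting the parity structure exactly as you describe.

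Part $2^o$, however, contains two genuine errors. First, the ``slicker route'' via $P_{A(\pi)}^2$ collapses entirely: since $A(\pi)$ is a pair partition, $P_{A(\pi)}$ is a product of disjoint transpositions, hence an involution, so $P_{A(\pi)}^2 = \id$ --- it is not a copy of $P_\pi$ on evens and $P_\pi^{-1}$ on odds. Second, your claim that invariance of $[2p,2q-1]$ forces ``$P_\pi$ maps $p \mapsto q$'' is false in general (take $\pi = 1_3$, $p=1$, $q=3$: then $P_\pi(1)=2 \neq 3$, yet $[2,5]\cap\bZ$ is a union of blocks $\{2,3\},\{4,5\}$ of $A(\pi)$). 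What invariance actually gives is only that $P_\pi$ restricts to a bijection $\{p,\ldots,q-1\} \to \{p+1,\ldots,q\}$; from this one can deduce that $p$ and $q$ share a block, but by \emph{iterating} $P_\pi$ starting from $p$ until one reaches $q$, not in one step. The paper makes precisely this iteration explicit: it builds a strictly increasing chain $p=p_0<p_1<\cdots<p_k=q$ with $P_{A(\pi)}(2p_{i-1})=2p_i-1$ (equivalently $P_\pi(p_{i-1})=p_i$), peeling off the invariant subinterval $[2p_0,2p_1-1]\cap\bZ$ and recursing on $[2p_1,2q-1]\cap\bZ$. The same chain handles ``$\Leftarrow$'' by decomposing $[2p,2q-1]\cap\bZ$ as the disjoint union of the intervals $[2p_{i-1},2p_i-1]\cap\bZ$, each a union of pairs of $A(\pi)$ by non-crossingness. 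Your complementation idea does not reduce $2^o$ to $1^o$ either: the complement $[1,2p-1]\cup[2q,2n]$ is, cyclically, again an interval of the same even-to-odd type, not of the odd-to-even type covered by $1^o$.
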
 

\begin{proof}  $1^o$ ``$\Rightarrow$''  We must prove that
that if $i \in [p,q] \cap \bZ$, then $P_{\pi} (i)$ still 
belongs to $[p,q]$.  And indeed, for such $i$ we have 
$2i \in [2p-1, 2q] \cap \bZ$, hence our current hypothesis implies 
$P_{A ( \pi )} (i) \in [ 2p-1, 2q ]$.  But then
$P_{\pi} (i) = ( P_{A ( \pi )} (i)  + 1 ) /2 \in [p,q + \frac{1}{2} ]$, 
so (since $P_{\pi} (i)$ is an integer), we conclude that 
$P_{\pi} (i)  \in [p,q] \cap \bZ$, as required.

\vspace{6pt}

$1^o$ ``$\Leftarrow$''  Here we must prove that if
$m \in [2p-1, 2q] \cap \bZ$, then $P_{A( \pi )} (m)$ still 
belongs to $[2p-1, 2q]$.  We distinguish two cases.

\vspace{6pt}

{\em Case 1:  $m$ is even.}
In this case we have $m = 2i$ with $i \in [p,q] \cap \bZ$. 
The current hypothesis entails that 
$P_{\pi} (i)  \in [p,q]$, so we find that 
\[
P_{A( \pi )} (m) = P_{A( \pi )} (2i) =
2 P_{\pi} (i) - 1 \in [ 2p-1, 2q-1 ] \subseteq [ 2p-1, 2q ],
\ \mbox{ as required.}
\]

\vspace{6pt}

{\em Case 2:  $m$ is odd.}
In this case we have $m = 2i-1$ with $i \in [p,q] \cap \bZ$. 
The current hypothesis entails that 
$P_{\pi}^{-1} (i)  \in [p,q]$, so we find that 
\[
P_{A( \pi )} (m) = P_{A( \pi )} (2i-1) =
2 P_{\pi}^{-1} (i) \in [ 2p, 2q ] \subseteq [ 2p-1, 2q ],
\ \mbox{ as required.}
\]

\vspace{6pt}

$2^o$ ``$\Rightarrow$'' We claim there exist $k \geq 1$ and
$p = p_0 < p_1 < \cdots < p_k = q$ such that
\begin{equation}    \label{eqn:34a}
P_{A( \pi )} (2 p_{i-1}) = 2 p_i - 1, \ \ \forall \, 1 \leq i \leq k.
\end{equation}
The points $p_i$ are found recursively, in the way described as follows.
We start with $p_0 = p$ and we look at 
$P_{A( \pi )} (2 p) =: 2 p_1 - 1$.  The current hypothesis gives us 
that $2 p_1 - 1 \in [ 2p, 2q - 1]$, hence that $p < p_1 \leq q$.  
If $p_1 = q$ then we take $k=1$ in (\ref{eqn:34a}) and we are done;
so let us assume that $p_1 < q$.  In this case we remark that 
$[ 2p, 2p_1 - 1] \cap \bZ$ is a union of blocks of $A( \pi )$
(because $A( \pi )$ is non-crossing), hence the set-difference
\[
[ 2p_1, 2q - 1] \cap \bZ =
\Bigl( \, [ 2p, 2q - 1] \cap \bZ \, \Bigr) \setminus
\Bigl( \, [ 2p, 2p_1 - 1] \cap \bZ \, \Bigr) 
\]
must be a union of blocks of $A( \pi )$ as well.  We can thus repeat
the same procedure as above: we look at 
$P_{A( \pi )} (2 p_1) =: 2 p_2 - 1$,  and from the invariance of 
$[ 2p_1, 2q - 1] \cap \bZ$ under $A( \pi )$ we infer that
$p_1 < p_2 \leq q$.  If $p_2 = q$ then we take $k=2$ in (\ref{eqn:34a}) 
and we are done; while if $p_2 < q$, then we look at the invariant set 
$[ 2p_2, 2q - 1] \cap \bZ$  and consider
$P_{A( \pi )} (2 p_2) =: 2 p_3 - 1$, and so on (where, of course, the 
process of finding new points $p_i$ must stop after finitely many steps).

We next compare (\ref{eqn:34a}) against the formula
$P_{A( \pi )} (2 p_{i-1}) = 2 P_{\pi} (p_{i-1}) - 1$ from the definition of
$P_{A( \pi )}$, and we see that the points $ p_0 , p_1 , \ldots , p_k$ 
must satisfy $P_{\pi} ( p_{i-1} ) = p_i$, for all $1 \leq i \leq k$.  This 
implies that all of $p_0 , p_1 , \ldots , p_k$ belong to the same block of 
$\pi$, and (since $p_0 = p$ and $p_k = q$) the required conclusion follows.

\vspace{6pt}

$2^o$ ``$\Leftarrow$''  From the definition of the permutation 
$P_{\pi}$ it follows that there exist $k \geq 1$ and
$p = p_0 < p_1 < \cdots < p_k = q$ such that
$P_{\pi} ( p_{i-1} ) = p_i$, $1 \leq i \leq k$.  We then have
\[
[ 2p, 2q - 1] \cap \bZ 
= \cup_{i=1}^k \Bigl( \, [ 2p_{i-1}, 2p_i - 1 ] \cap \bZ \, \Bigr)
= \cup_{i=1}^k 
\Bigl( \, [ 2p_{i-1}, P_{A( \pi )} (2p_{i-1}) ] \cap \bZ \, \Bigr) .
\]
This in turn implies (by taking into account that $A( \pi )$ is 
non-crossing) that $[ 2p, 2q - 1] \cap \bZ$ is a union of blocks of 
$A( \pi )$, as required.
\end{proof}

$\ $

\begin{proposition}   \label{prop:34}
Let $n$ be a positive integer, let $\pi, \rho$ be in $NC(n)$, 
and let us consider the arch-diagrams 
$A( \pi ), A( \rho ) \in NCP(2n)$ and
the meandric system $M_{\pi , \rho} \in \cS_{2n}$.
The following three statements are equivalent:

\vspace{6pt}

($1$) $M_{\pi , \rho}$ is irreducible.

\vspace{6pt}

($2$) $A( \pi ) \vee A( \rho ) = 1_{2n}$ 
(join considered in $NC(2n)$).

\vspace{6pt}

($3$) $\pi \vee \rho = 1_n$ and $\pi \wedge \rho = 0_n$
(join and meet considered in $NC(n)$).
\end{proposition}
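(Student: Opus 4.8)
The plan is to establish the chain of equivalences by proving the two implications $(1)\Leftrightarrow(2)$ and $(2)\Leftrightarrow(3)$ separately, with Lemma \ref{lemma:3.4} doing most of the combinatorial work.

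\textbf{Equivalence $(1)\Leftrightarrow(2)$.} First I would translate irreducibility of $M_{\pi,\rho}$ into a statement about invariant sets. By Remark \ref{rem:32}$(1^o)$, a set $S\subseteq\{1,\dots,2n\}$ is invariant for $M_{\pi,\rho}$ precisely when $S$ is simultaneously a union of blocks of $A(\pi)$ and a union of blocks of $A(\rho)$. The key observation is that unions of blocks of an arch-diagram $A(\pi)$ are exactly the unions of blocks of the partition $\widehat{\pi}$ whose blocks are the blocks of $A(\pi)$; equivalently, $S$ is a union of blocks of $A(\pi)$ iff $S$ is $\leq$-above the partition $A(\pi)$ in $NC(2n)$ when $S$ is regarded (together with its complement, broken into $A(\pi)$-blocks) appropriately — more cleanly: $S$ is a union of blocks of both $A(\pi)$ and $A(\rho)$ iff $S$ is a union of blocks of $A(\pi)\vee A(\rho)$. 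Therefore $M_{\pi,\rho}$ has a nontrivial invariant set iff $A(\pi)\vee A(\rho)$ has more than one block, and it has a nontrivial invariant \emph{subinterval} iff $A(\pi)\vee A(\rho)\neq 1_{2n}$. The one subtlety here is that Definition \ref{def:31}$(3^o)$ speaks of invariant \emph{intervals}, not arbitrary invariant sets; so I need the fact that if $M_{\pi,\rho}$ admits any nontrivial invariant set then it admits a nontrivial invariant interval. This follows because the blocks of $A(\pi)\vee A(\rho)$, being blocks of a non-crossing partition of $\{1,\dots,2n\}$, include at least one which is an interval (any "innermost" block of a non-crossing partition is an interval) — and if that block is all of $\{1,\dots,2n\}$ we are in case $1_{2n}$, otherwise it is a proper invariant subinterval. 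This is the step I expect to require the most care, since it is where the interval-versus-arbitrary-set distinction is reconciled.

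\textbf{Equivalence $(2)\Leftrightarrow(3)$.} Here the plan is to use both parts of Lemma \ref{lemma:3.4} to characterize when $A(\pi)\vee A(\rho)=1_{2n}$ fails. A proper invariant interval $[2p-1,2q]\cap\bZ$ corresponds, by Lemma \ref{lemma:3.4}$(1^o)$ applied to both $\pi$ and $\rho$, to $[p,q]\cap\bZ$ being a union of blocks of both $\pi$ and $\rho$, i.e.\ a union of blocks of $\pi\vee\rho$; the existence of such a proper interval (other than all of $\{1,\dots,n\}$) is exactly the failure of $\pi\vee\rho=1_n$. A proper invariant interval of the form $[2p,2q-1]\cap\bZ$ corresponds, by Lemma \ref{lemma:3.4}$(2^o)$, to $p$ and $q$ lying in the same block of both $\pi$ and $\rho$; the existence of such $p<q$ is exactly the failure of $\pi\wedge\rho=0_n$ (since $\pi\wedge\rho$ has a non-singleton block iff some two points lie in a common block of $\pi$ and of $\rho$). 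Intervals of the two mixed parities $[2p-1,2q-1]$ and $[2p,2q]$ have odd cardinality, so by Remark \ref{rem:32}$(1^o)$ they can never be invariant and need not be considered. Putting these together: $M_{\pi,\rho}$ is reducible iff it has a proper invariant interval iff ($\pi\vee\rho\neq 1_n$) or ($\pi\wedge\rho\neq 0_n$), which is the negation of $(3)$; combined with $(1)\Leftrightarrow(2)$ this yields the full equivalence. I would present $(2)\Leftrightarrow(3)$ and $(1)\Leftrightarrow(3)$ and let $(2)$ fall out, or equally well run the cycle $(1)\Rightarrow(2)\Rightarrow(3)\Rightarrow(1)$; the cleanest writeup probably proves $(1)\Leftrightarrow(3)$ directly via the interval analysis above and then notes $(2)$ as an intermediate reformulation.

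\textbf{Summary of the main obstacle.} The routine part is the bookkeeping with Lemma \ref{lemma:3.4}; the genuinely delicate point is justifying that reducibility (existence of a proper invariant \emph{interval}) is equivalent to the mere existence of a proper invariant \emph{set}, which is what the join/meet conditions naturally encode. The bridge is the structural fact that a non-crossing partition always has an interval block, so the join $A(\pi)\vee A(\rho)$ not being $1_{2n}$ already forces a proper invariant interval into existence.
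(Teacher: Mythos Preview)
Your analysis of $(2)\Leftrightarrow(3)$ via the two parity cases and Lemma \ref{lemma:3.4} is essentially the paper's argument, and is fine. The problem is in your treatment of $(1)\Leftrightarrow(2)$.

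The assertion ``$S$ is a union of blocks of both $A(\pi)$ and $A(\rho)$ iff $S$ is a union of blocks of $A(\pi)\vee A(\rho)$'' is \emph{false} when $\vee$ is the $NC$-join (it is only true for the $\cP$-join $\widetilde{\vee}$). Consequently your derived statement ``$M_{\pi,\rho}$ has a nontrivial invariant set iff $A(\pi)\vee A(\rho)\neq 1_{2n}$'' is false, and so is the ``fact'' you say you need, that a nontrivial invariant set forces a nontrivial invariant interval. The meandric system on the left of Figure~1 is a counterexample to all three: it has invariant sets $\{1,2,5,6\}$ and $\{3,4,7,8\}$ (its orbits), yet it is irreducible and $A(\pi)\vee A(\rho)=1_8$. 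So what you flag as ``the main obstacle'' is not a delicate step to be carried out --- it is simply not true.

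The fix is that you never needed to pass through arbitrary invariant sets at all. For $(\overline{1})\Rightarrow(\overline{2})$ you already \emph{have} an invariant interval $J$ from the definition of reducibility; since $J$ is an interval, $\{J,\overline{J}\}$ is itself a non-crossing partition of $\{1,\dots,2n\}$ dominating both $A(\pi)$ and $A(\rho)$, whence $A(\pi)\vee A(\rho)\leq\{J,\overline{J}\}\neq 1_{2n}$. This is exactly what the paper does, and it is the step that genuinely uses that $J$ is an interval (not the ``innermost block'' argument, which you correctly use but only for the reverse direction $(\overline{2})\Rightarrow(\overline{1})$). The same remark applies, in miniature, to your sentence ``i.e.\ a union of blocks of $\pi\vee\rho$'' in the $(2)\Leftrightarrow(3)$ part: that ``i.e.'' is not automatic, and is again justified by the fact that $[p,q]$ is an interval, so $\{[p,q],\overline{[p,q]}\}\in NC(n)$.
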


\begin{proof} We will verify the equivalence of the 
complementary statements that:

\vspace{6pt}

($\overline{1}$) $M_{\pi , \rho}$ is reducible;
\hspace{10pt}
($\overline{2}$) $A( \pi ) \vee A( \rho ) \neq 1_{2n}$;
\hspace{10pt}
($\overline{3}$) 
$\pi \vee \rho \neq 1_n$ or $\pi \wedge \rho \neq 0_n$.

\vspace{6pt}

``$( \overline{1} ) \Rightarrow ( \overline{2} )$''.
Let $J$ be a proper subinterval of $\{ 1, \ldots , 2n \}$ which is 
invariant under the action of $M_{\pi , \rho}$.  Thus $J$ is, at the 
same time, a union of blocks of $A( \pi )$ and a union of blocks of 
$A( \rho )$.  Obviously, the same is true for 
$\overline{J} = \{ 1, \ldots , 2n \} \setminus J$, which implies
that the partition $\sigma := \{ J, \overline{J} \} \in NC(2n)$
is such that $\sigma \geq A( \pi )$ and 
$\sigma \geq A( \rho )$.  It follows that 
$A( \pi ) \vee A( \rho ) \leq \sigma$ and hence that
$A( \pi ) \vee A( \rho ) \neq 1_{2n}$, as required.

\vspace{6pt}

``$( \overline{2} ) \Rightarrow ( \overline{3} )$''.
Let us denote $A( \pi ) \vee A( \rho ) =: \sigma \in NC(2n)$.  
Every non-crossing partition has interval blocks, hence we can 
find $1 \leq a \leq b \leq 2n$ such that 
$J := [a,b] \cap \bZ$ is a block of $\sigma$.  Observe that 
$J \neq \{ 1, \ldots , 2n \}$ (since $\sigma \neq 1_{2n}$).
Thus $J$ is a proper subinterval of $\{ 1, \ldots , 2n \}$ which
is, at the same time, a union of blocks of $A( \pi )$ and a union 
of blocks of $A( \rho )$.  We distinguish two possible cases.
\vspace{6pt}

{\em Case 1:  $\min (J)$ is an odd number.}
In this case, $J$ must be of the form
$J := [2p-1, 2q ] \cap \bZ$ for some $1 \leq p \leq q \leq n$.
Lemma \ref{lemma:3.4}.1 gives us that $V := [p, q] \cap \bZ$ 
is at the same time a union of blocks of $\pi$ and a union of 
blocks of $\rho$.  Note that $V \neq \{ 1, \ldots , n \}$, since
$J \neq \{ 1, \ldots , 2n \}$.  Then 
$\lambda := \{ V, \{ 1, \ldots , n \} \setminus V \}$ is in 
$NC(n)$, has $| \lambda | = 2$, and is such that  
$\pi \leq \lambda$ and $\rho \leq \lambda$; hence 
$\pi \vee \rho \leq  \lambda$, implying $\pi \vee \rho \neq  1_n$,
and ($\overline{3}$) holds.

\vspace{6pt}

{\em Case 2:  $\min (J)$ is an even number.}
In this case, $J$ must be of the form
$J := [2p, 2q - 1 ] \cap \bZ$ for some $1 \leq p < q \leq n$.
Lemma \ref{lemma:3.4}.2 gives us that $p$ and $q$ belong to the 
same block of $\pi$, and also that they belong to the same block of
$\rho$.  This implies $\pi \wedge \rho \neq 0_n$ (as $p,q$ are in 
the same block of $\pi \wedge \rho$), and ($\overline{3}$) holds
in this case as well.

\vspace{10pt}

``$( \overline{3} ) \Rightarrow ( \overline{1} )$''.
Here we must verify that either of the hypotheses 
$\pi \vee \rho \neq 1_n$ or $\pi \wedge \rho \neq 0_n$
imply the reducibility of $M_{\pi, \rho}$.

\vspace{6pt}

{\em Claim 1.}  If $\pi \vee \rho \neq 1_n$, then 
$M_{\pi, \rho}$ is reducible. 

{\em Verification of Claim 1.}  Let us denote 
$\pi \vee \rho =: \lambda$.  Every non-crossing partition has 
interval blocks, hence we can find $1 \leq p \leq q \leq n$ such that 
$[ p, q ] \cap \bZ$ is a block of $\lambda$.  Since $\pi \leq \lambda$,
it follows that $[ p, q ] \cap \bZ$ is a union of blocks of $\pi$, 
and Lemma \ref{lemma:3.4}.1 then gives us that 
$J := [2p-1, 2q ] \cap \bZ$ is a union of blocks of $A( \pi )$. 
In the same way we obtain that $J$ is a union of blocks of 
$A( \rho )$.  Note that $J \neq \{ 1, \ldots , 2n \}$ (from
$J = \{ 1, \ldots , 2n \}$ we would infer $p=1, q=n$, hence that
$\lambda = 1_n$).  Thus $J$ is a proper subinterval of 
$\{ 1,\ldots ,2n \}$ which is invariant under $M_{\pi, \rho}$, and 
Claim 1 follows.

\vspace{6pt}

{\em Claim 2.}  If $\pi \wedge \rho \neq 0_n$, then 
$M_{\pi, \rho}$ is reducible. 

{\em Verification of Claim 2.}
$\pi \wedge \rho$ has blocks that are not singletons, hence we can find
$1 \leq p < q \leq n$ such that $p$ and $q$ are in the same block of
$\pi \wedge \rho$.  These $p$ and $q$ belong to the same block of
$\pi$, hence Lemma \ref{lemma:3.4}.2 gives us that 
$J := [2p, 2q - 1] \cap \bZ$ is a union of blocks of $A( \pi )$.  In 
the same way we find that $J$ is a union of blocks of $A( \rho )$.
Thus $J$ is a proper subinterval of $\{ 1,\ldots ,2n \}$ which is 
invariant under $M_{\pi, \rho}$, and Claim 2 follows.
\end{proof}

$\ $

\begin{remark}   \label{rem:35}

$1^o$ Theorem \ref{thm:11} follows from Proposition \ref{prop:34}, 
by equating the cardinalities of the sets of $( \pi , \rho )$'s 
that are considered in the statements ($1$) and ($3$) of that
proposition.

\vspace{6pt}

$2^o$ Condition ($2$) of Proposition \ref{prop:34} has a nice 
interpretation supporting the idea that irreducible meandric 
systems truly are some kind of counterparts of meanders.  To be 
specific, let $\cP (n)$ denote the set of {\em all} partitions 
of $\{ 1, \ldots , n \}$, crossing or non-crossing, and on 
$\cP (n)$ let us consider the partial order by reverse refinement 
(defined exactly as in formula $(\ref{eqn:21a})$ of 
Notation \ref{def:21}.4).  Then $( \cP (n) , \leq )$ turns out 
to be a lattice, with meet operation ``$\wedge$'' defined exactly 
as for $NC(n)$, by block intersections (same formula as 
$(\ref{eqn:22a})$ of Notation \ref{def:22}).  However, the join 
operation of $\cP (n)$ no longer coincides with the ``$\vee$'' of 
$NC(n)$, and we will denote it (slightly differently) as 
``$\widetilde{\vee}$''.  For instance, the reader may find it 
instructive to note that 
$\{ \, \{ 1,3 \}, \, \{ 2 \}, \, \{ 4 \} \, \}
\, \widetilde{\vee} \,
\{ \, \{ 1 \}, \, \{ 3 \}, \, \{ 2,4 \} \, \}
= \{ \, \{ 1,3 \}, \, \{ 2,4 \} \, \} \in \cP (4)$,
in contrast to the comment about ``$\vee$'' which appeared
in the last sentence of Remark \ref{def:22}.

Once the join $\widetilde{\vee}$ on $\cP (n)$ is put into 
evidence, it is easy to verify that for every $n \in \bN$ 
and $\pi , \rho \in NC(n)$ one has:
\begin{equation}   \label{eqn:35a}
\left(  \begin{array}{c}
M_{\pi , \rho} \mbox{ is a meander}   \\
\mbox{(in the sense of Def. \ref{def:31}.2)}
\end{array}   \right) \ \Leftrightarrow \ 
\Bigl( \, A ( \pi ) \, \widetilde{\vee} \, A( \rho ) = 1_{2n}
\Bigr),
\end{equation}
with $A( \pi ), A( \rho ) \in NCP(2n)$ denoting the 
arch-diagrams associated to $\pi$ and $\rho$, respectively.
By comparing (\ref{eqn:35a}) to condition ($2$) of 
Proposition \ref{prop:34} we see that the concept of irreducible 
meandric system is indeed analogous to the one of meander -- we
only change the lattice where the join operation is 
being considered.
\end{remark} 

$\ $

$\ $

\begin{center}
{\bf\large 4. Counting irreducible meandric systems with 
free cumulants}
\end{center}
\setcounter{section}{4}
\setcounter{theorem}{0}
\setcounter{equation}{0}

The goal of this section is to explain how the power series
$\firr (z) = \sum_{n=1}^{\infty} \mnoirr_n \, z^{2n}$
appears as $R$-transform of a nice probability distribution 
(viewed here as a linear functional on $\bC [X]$).  In order to 
make the presentation self-contained, we first review 
the relevant facts needed about free cumulants and $R$-transforms.

$\ $

\begin{definition-and-remark}   \label{def:41}

Let $\mu : \bC [X] \to \bC$ be linear with $\mu (1) = 1$.  

\vspace{6pt}

$1^{o}$ We will use the notation $( \kappa_n ( \mu ) )_{n=1}^{\infty}$ 
for the sequence of {\em free cumulants} of $\mu$.  This is the 
sequence of complex numbers which is uniquely determined by the 
requirement that 
\begin{equation}    \label{eqn:41a}
\mu (X^n) = \sum_{\pi \in NC(n)} \, \Bigl( 
\, \prod_{V \in \pi} \kappa_{ { }_{|V|} } ( \mu ) \, \Bigr), 
\ \ \forall \, n \in \bN .
\end{equation}
Equation (\ref{eqn:41a}) goes under the name of 
``moment-(free) cumulant'' formula.  For instance for $n \leq 3$ 
it says that
\[
\mu (X) = \kappa_1 ( \mu ), \
\mu (X^2) = \kappa_2 ( \mu ) + \kappa_1 ( \mu )^2, \
\mu (X^3) = \kappa_3 ( \mu ) + 3 \kappa_1 (\mu ) \kappa_2 ( \mu ) 
                             + \kappa_1 ( \mu )^3,
\]
which then yields explicit expressions for the first free cumulants:
\begin{equation}    \label{eqn:41b}
\kappa_1 ( \mu ) = \mu (X), \
\kappa_2 ( \mu ) = \mu (X^2) - \mu (X)^2, \
\kappa_3 ( \mu ) = \mu (X^3) - 3 \mu (X) \mu (X^2) 
                             + 2 \mu (X)^3.
\end{equation}
One can write a formula like in (\ref{eqn:41b}) for $\kappa_n ( \mu )$ 
with general $n \in \bN$, where the occurring coefficients are 
understood in terms of the M\"obius function of $NC(n)$; but we 
will not need this here (the interested reader may check pp. 
175-176 in Lecture 11 of the monograph \cite{NS2006}).

\vspace{6pt}

$2^o$ The power series 
$R_{\mu} (z) := \sum_{n=1}^{\infty} \kappa_n ( \mu ) z^n$
is called the {\em R-transform} of $\mu $.

\vspace{6pt}

$3^o$ The {\em functional equation of the R-transform} says that
\begin{equation}    \label{eqn:41c}
R_{\mu} \bigl( \, z( 1 + M_{\mu} (z)) \, \bigr) = M_{\mu} (z),
\end{equation}
with $R_{\mu}$ as above and 
$M_{\mu} (z) := \sum_{n=1}^{\infty} \mu (X^n) z^n$
(moment-generating series for $\mu$).
For the derivation of (\ref{eqn:41c}) out of the moment-cumulant 
formula (\ref{eqn:41a}), see e.g Theorem 10.23 in \cite{NS2006}.

\vspace{6pt}

$4^{o}$ The functional $\mu$ is said to be {\em symmetric}
when it has $\mu (X^{2n-1}) = 0$, 
$\forall \, n \in \bN$.  An immediate consequence of the 
moment-cumulant formula (\ref{eqn:41a}) is that $\mu$ is symmetric 
if and only if $\kappa_{2n-1} ( \mu ) = 0$,
$\forall \, n \in \bN$. 
\end{definition-and-remark}

$\ $

\begin{definition-and-remark}   \label{def:42}
In this definition, $( \Omega, \cF , P)$ is a probability space
and $\xi , \eta : \Omega \to \bR$ are random variables with
finite moments of all orders.

\vspace{6pt}

$1^o$ Let $\mu : \bC [X] \to \bC$ be the linear functional 
determined by the requirement that 
\[
\mu (X^n) = \int \xi^n \, dP, \ \ \forall \, n \in 
\bN \cup \{ 0 \} .
\]
We will refer to $\mu$ as the {\em distribution} of $\xi$.  The 
free cumulants of $\mu$ (as introduced in the preceding 
definition) are also called {\em free cumulants of $\xi$}, and we 
will use the notation 
\[
\kappa_n ( \xi ) := \kappa_n ( \mu ), \ \ n \in \bN.
\]

\vspace{6pt}

$2^o$ We will say that the random variable $\xi$ is 
{\em centred semicircular of variance $1$} to mean that its 
distribution is $( 2 \pi )^{-1} \sqrt{4 - t^2} \, dt$ on 
$[ -2,2 ]$, i.e that for every $n \in \bN$ one has
\begin{equation}  \label{eqn:42a}
\int \xi^n \, dP = \frac{1}{2 \pi} \int_{-2}^2 
t^n \sqrt{4 - t^2} \, dt = 
\left\{  \begin{array}{ll}
0, & \mbox{ if $n$ is odd,}  \\
C_{n/2} \mbox{ (Catalan number),} & \mbox{ if $n$ is even.} 
\end{array}  \right.
\end{equation}
It is easy to verify (by using the equalities 
$| NCP(2n) | = C_n$, $n \in \bN$) that the free cumulants of
a $\xi$ as in (\ref{eqn:42a}) are 
\begin{equation}  \label{eqn:42b}
\kappa_n ( \xi ) =  
\left\{  \begin{array}{ll}
1, & \mbox{ if $n = 2$,}  \\
0, & \mbox{ otherwise.}
\end{array}  \right.
\end{equation}

$3^o$ Suppose that the random variables $\xi$ and $\eta$ are 
independent, hence that the product $\xi \eta$ has moments
\[
\int ( \xi  \eta )^n \, dP 
= \int \xi^n \, dP  \cdot 
\int \eta^n \, dP, \ \ n \in \bN.
\]
Theorem 1.2 of \cite{BD2014} gives the following formula for 
calculating the free cumulants of $\xi  \eta$ in terms of 
those of $\xi$ and of $\eta$: for every $n \in \bN$ one has
\begin{equation}  \label{eqn:42c}
\kappa_n ( \xi \eta ) = \sum_{
\begin{array}{c}
{\scriptstyle \pi , \rho \in NC(n) \ such}  \\
{\scriptstyle that \ \pi \vee \rho = 1_n}
\end{array} } 
\ \Bigl( \, \prod_{V \in \pi} \kappa_{|V|} ( \xi ) \, \Bigr) \,
\Bigl( \, \prod_{W \in \rho} \kappa_{|W|} ( \eta ) \, \Bigr) .
\end{equation}
\end{definition-and-remark}

$\ $

The next proposition is a rephrasing of Theorem \ref{thm:12} 
from the introduction.

$\ $

\begin{proposition}    \label{prop:43}
Let $\xi , \eta : \Omega \to \bR$ be independent random variables
(as in Remark \ref{def:42}.3), where each of $\xi, \eta$ is 
centred semicircular of variance $1$ (as in Remark \ref{def:42}.2). 
Then the free cumulants of the product $\xi \eta$ are
\begin{equation}   \label{eqn:43a}
\kappa_{2n-1} ( \xi \eta ) = 0 \mbox{ and }
\kappa_{2n} ( \xi \eta ) = \mnoirr_n , \ \ n \in \bN.
\end{equation}
\end{proposition}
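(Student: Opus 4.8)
The plan is to feed the specific free cumulants of $\xi$ and $\eta$ into the Biane--Dehornoy formula \eqref{eqn:42c} and then recognize the resulting sum as the combinatorial count appearing in Theorem \ref{thm:11}. First I would observe that, since $\xi$ is centred semicircular of variance $1$, Equation \eqref{eqn:42b} tells us $\kappa_{|V|}(\xi)$ is $1$ if $|V|=2$ and $0$ otherwise; the same holds for $\eta$. Hence in the sum \eqref{eqn:42c} the product $\prod_{V \in \pi} \kappa_{|V|}(\xi)$ is either $1$ or $0$, and it equals $1$ precisely when every block of $\pi$ has exactly two elements, i.e.\ when $\pi \in NCP(2n)$ viewed inside $NC(2n)$ --- wait, more precisely when $\pi$ is a non-crossing pairing of $\{1,\dots,n\}$, which forces $n$ to be even. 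Similarly for $\rho$ and $\eta$. So the sum collapses to a count of pairs of non-crossing pairings of $\{1,\dots,n\}$ whose join is $1_n$.

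Next I would treat the parity separately. When $n=2m-1$ is odd, there are no non-crossing pairings of $\{1,\dots,n\}$ at all, so \eqref{eqn:42c} gives $\kappa_{2m-1}(\xi\eta)=0$, which is the first assertion of \eqref{eqn:43a}. When $n=2m$ is even, write $\mathcal{Q}$ for the set of non-crossing pairings of $\{1,\dots,2m\}$; then \eqref{eqn:42c} reduces to
\[
\kappa_{2m}(\xi\eta) = \bigl| \{ (\pi,\rho) \in \mathcal{Q}^2 \mid \pi \vee \rho = 1_{2m} \} \bigr|,
\]
the join being taken in $NC(2m)$. Now I would invoke the doubling bijection $A \colon NC(m) \to NCP(2m)$ from Notation-and-Remark \ref{def:24} to reparametrize: every pair $(\pi,\rho) \in \mathcal{Q}^2$ is $(A(\pi'),A(\rho'))$ for a unique pair $(\pi',\rho') \in NC(m)^2$. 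The condition $A(\pi') \vee A(\rho') = 1_{2m}$ is exactly condition $(2)$ of Proposition \ref{prop:34}, which by the equivalence $(2) \Leftrightarrow (3)$ proved there is the same as $\pi' \vee \rho' = 1_m$ and $\pi' \wedge \rho' = 0_m$. By Theorem \ref{thm:11} (equivalently, by the defining formula \eqref{eqn:1d}), the number of such pairs $(\pi',\rho')$ is exactly $\mnoirr_m$. Hence $\kappa_{2m}(\xi\eta) = \mnoirr_m$, which is the second assertion of \eqref{eqn:43a} with $n = 2m$.

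I do not anticipate a serious obstacle here: the whole argument is a bookkeeping chain through results already established. The one point that requires a little care is making sure the indexing matches --- the Biane--Dehornoy sum in \eqref{eqn:42c} is indexed by $NC(n)$ with $\pi \vee \rho = 1_n$, and after the pairings constraint and the doubling bijection the ``$n$'' there becomes the ``$2m$'' of $NC(2m)$, while $\mnoirr_m$ counts pairs in $NC(m)^2$. As long as one keeps track of the factor-of-two shift between the number of bridges $2m$ and the index $m$, and records that the odd case is vacuous because $\{1,\dots,2m-1\}$ admits no pairing, the proof is complete. Finally I would note that Proposition \ref{prop:43} is literally a restatement of Theorem \ref{thm:12}: by Definition-and-Remark \ref{def:41}.2 the $R$-transform of $\xi\eta$ is $\sum_{k=1}^\infty \kappa_k(\xi\eta) z^k = \sum_{m=1}^\infty \mnoirr_m z^{2m} = \firr(z)$, using \eqref{eqn:43a} and \eqref{eqn:1e}.
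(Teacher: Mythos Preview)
Your proposal is correct and follows essentially the same route as the paper: apply the Biane--Dehornoy formula \eqref{eqn:42c}, use the semicircular cumulants \eqref{eqn:42b} to cut the sum down to pairs of non-crossing pairings with join $1_{2n}$, and then invoke the doubling bijection together with Proposition \ref{prop:34}. The only cosmetic differences are that the paper handles the odd case by observing that $\xi\eta$ has vanishing odd moments (rather than that odd sets admit no pairings), and that the paper cites the equivalence $(1)\Leftrightarrow(2)$ of Proposition \ref{prop:34} directly (landing on the definition \eqref{eqn:32c} of $\mnoirr_n$) instead of going through $(2)\Leftrightarrow(3)$ and Theorem \ref{thm:11}.
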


\begin{proof}  It is clear that $\xi \eta$ has vanishing
odd moments, which implies that
$\kappa_{2n-1} ( \xi \eta ) = 0$ for all $n \in \bN$.
For an even free cumulant $\kappa_{2n} ( \xi \eta )$ we 
calculate as follows:
\begin{align*}
\kappa_{2n} ( \xi \eta )
& = \sum_{  \begin{array}{c}
{\scriptstyle \sigma , \theta \in NC(2n) \ such}  \\
{\scriptstyle that \ \sigma \vee \theta = 1_{2n} }
\end{array} } 
\ \Bigl( \, \prod_{V \in \sigma} \kappa_{|V|} ( \xi ) \, \Bigr) \,
\Bigl( \, \prod_{W \in \theta} \kappa_{|W|} ( \theta ) \, \Bigr)   \\
& \mbox{ $\ $} \mbox{$\ $ (by Theorem 1.2 of \cite{BD2014} -- 
                 Equation (\ref{eqn:42c}))}                        \\
& = \sum_{  \begin{array}{c}
{\scriptstyle \sigma , \theta \in NCP(2n)  \ such}  \\
{\scriptstyle that \ \sigma \vee \theta = 1_{2n} }
\end{array} } \ 1 
\mbox{$\ $ (by Equation (\ref{eqn:42b}))}                        \\
& = | \, \{ ( \sigma , \theta ) \in NCP(2n)^2 \mid 
              \sigma \vee \theta = 1_{2n} \} \, |                  \\
& = | \, \{ ( \pi , \rho ) \in NC(n)^2 \mid 
           A( \pi ) \vee A( \rho ) = 1_{2n} \} \, |               \\
& \mbox{ $\ $}
\mbox{$\ $ (by writing $\sigma = A( \pi ), \theta = A( \rho ))$}  \\
& = \mnoirr_n  
\mbox{ $\ $ (due to ``$(1) \Leftrightarrow (2)$'' in Proposition 
                                           \ref{prop:34}).}
\end{align*}
\end{proof}

$\ $

\begin{remark}   \label{rem:44}
The paper \cite{BD2014} pays special attention to a sequence
of numbers denoted as $( b_{n,2}^{*} )_{n=1}^{\infty}$, where 
one puts
\begin{equation}   \label{eqn:44a}
b_{n,2}^{*} := 
 \ \vline \, \{  ( \pi , \rho ) \in NC(n)^2 \mid 
\pi \wedge \rho = 0_n \} \ \vline \, , \ \ n \in \bN .
\end{equation}
One of the main points made in \cite{BD2014} about the above
sequence is that it can be neatly identified as a sequence of 
free cumulants:
\begin{equation}   \label{eqn:44b}
b_{n,2}^{*} = \kappa_n ( \, ( \xi  \eta )^2 \, ), \ \ n \in \bN ,
\end{equation}
for the same $\xi, \eta$ as considered in Proposition 
\ref{prop:43}.  

Now, one has a non-trivial result about how the free 
cumulants of the square of a symmetric random variable (here 
$( \xi \eta )^2$) are expressed in terms of the even free 
cumulants of the random variable itself.  This is done via an
equation which resembles the moment-cumulant formula, and 
says in the case at hand that
\begin{equation}    \label{eqn:44c}
\kappa_n ( \, (\xi \eta)^2 \, ) = \sum_{\pi \in NC(n)} \, \Bigl( 
\, \prod_{V \in \pi} \kappa_{ { }_{2|V|} } (\xi \eta) \, \Bigr), 
\ \ \forall \, n \in \bN .
\end{equation}
For the proof of (\ref{eqn:44c}), see Proposition 11.25 
in \cite{NS2006}.  

In view of the interpretations we have for the free 
cumulants on the two sides of Equation (\ref{eqn:44c}), we 
thus arrive to a formula which relates the 
numbers $b_{n,2}^{*}$ to irreducible meandric systems, namely
\begin{equation}   \label{eqn:44d}
b_{n,2}^{*} = \sum_{\pi \in NC(n)} 
\Bigl( \, \prod_{V \in \pi} \mnoirr_{|V|} \, \Bigr),
\ \ \forall \, n \in \bN .
\end{equation}

The numbers $b_{n,2}^{*}$ are part of a larger collection of 
numbers $b_{n,d}^{*}$ with $n,d \in \bN$ -- see Equation (2.6) 
of \cite{BD2014} for a description of $b_{n,d}^{*}$ given in 
terms of non-crossing partitions.  We note that for $d=3$ 
one has 
\begin{equation}   \label{eqn:44e}
b_{n,3}^{*} = 
 \ \vline \, \{  ( \pi_1 , \pi_2, \pi_3 ) \in NC(n)^3 \mid 
\pi_1 \wedge \pi_2 = 0_n \mbox{ and } 
\pi_2 \vee \pi_3 = 1_n \} \ \vline \, , \ \ n \in \bN .
\end{equation}
There is a slight resemblance of Equation (\ref{eqn:44e}) with 
(\ref{eqn:1d}) of Theorem \ref{thm:11}, which prompts the question 
if one could also find a formula relating the numbers 
$b_{n,3}^{*}$ to meandric systems.
\end{remark}

$\ $

\begin{remark}   \label{rem:45}

$1^{o}$ The linear functional involved in Proposition 
\ref{prop:43} (that is, the distribution of $\xi \eta$) has 
vanishing moments of odd order, while its even moment of order 
$2n$ is
\[
\int (\xi  \eta )^{2n} \, dP =
\int \xi^{2n} \, dP \cdot \int \eta^{2n} \, dP 
= C_n^2, \ \ n \in \bN.
\]
Upon writing the functional equation of the $R$-transform 
(Equation (\ref{eqn:41c})) for this particular functional,
one thus gets that
\[
\firr \Bigl( \, z( 1+ \sum_{n=1}^{\infty} C_n^2 z^{2n} ) \, \Bigr)
= \sum_{n=1}^{\infty} C_n^2 z^{2n} .
\]
Modulo some trivial transformations, this is the same functional 
equation as found by Lando and Zvonkin in \cite{LZ1992}.

\vspace{6pt}

$2^{o}$ The method used in \cite{LZ1992} for obtaining the radius 
of convergence of $\firr$ points to a class of functionals 
$\mu : \bC [X] \to \bC$ with tractable radius of convergence for 
$R_{\mu}$, as follows.  Suppose that:

\vspace{6pt}

(i) All the free cumulants $( \kappa_n ( \mu ) )_{n=1}^{\infty}$ 
are real non-negative numbers.

(ii) The moment series $M_{\mu} (z)$ 
has a finite positive radius of convergence $r_o$.

(iii) There exist $c > 0$ and $\beta > 1$ such that (with 
$r_o$ from (ii)) one has $\mu (X^n) \leq c r_o^{-n} n^{- \beta}$, 
for all $n \in \bN$. 

\vspace{6pt}

\noindent
Then it makes sense to consider the finite value
$M_{\mu} (r_o) := \sum_{n=1}^{\infty} \mu (X^n) r_o^n 
\in ( 0, \infty )$, and the radius of convergence of the 
$R$-transform $R_{\mu}$ is equal to $r_1$, where
\begin{equation}   \label{eqn:46a}
r_1 = r_o ( 1+ M_{\mu} (r_o) ) .
\end{equation}
The reason for occurrence of this specific value $r_1$ is that upon
writing the functional equation of the $R$-transfom as 
\[
M_{\mu} (z) = R_{\mu} (w) \ \ \mbox{ for } 
w = z( 1+ M_{\mu} (z)),
\]
and upon letting $z$ and $w$ grow along the positive semiaxes of the 
$z$-plane and $w$-plane, they will hit at the same time the 
singularities that are closest to origin for $M_{\mu}$ and $R_{\mu}$,
respectively.

In the specific case of Proposition \ref{prop:43} (when $\mu$ is
the distribution of $\xi  \eta$), one has 
$M_{\mu} (z) = \sum_{n=1}^{\infty} C_n^2 z^{2n}$ with radius of 
convergence $r_o = 1/4$.  From the asymptotics
$C_n \sim c 4^n n^{-3/2}$ (which folows e.g. from 
Stirling's formula) it follows that in (iii) above we may take 
$\beta = 3$.  The radius of convergence for $R_{\mu} = \firr$ thus 
comes out as
\begin{equation}   \label{eqn:46b}
r_1 = \frac{1}{4} \cdot  \Bigl( \, 1+ \sum_{n=1}^{\infty}
C_n^2 ( 1/4 )^{2n} \, \Bigr) .
\end{equation}
As shown in \cite{LZ1992}, one can determine precisely that 
$1+ \sum_{n=1}^{\infty} C_n^2 ( 1/4 )^{2n} = 4(4- \pi ) / \pi$,
which leads to $r_1 = (4- \pi) / \pi$, and to the value of 
$c^{ (irr) }$ indicated in  Equation (\ref{eqn:1b}).
\end{remark}

$\ $

$\ $

\begin{center}
{\bf\large 5. Counting meanders with free cumulants?}
\end{center}
\setcounter{section}{5}
\setcounter{theorem}{0}
\setcounter{equation}{0}

In this section we look at the framework analogous to the one of 
Theorem \ref{thm:12}, but where instead of the power series 
$\firr$ from Theorem \ref{thm:12} we consider the series 
\begin{equation}   \label{eqn:5a}
f_1 (z) := \sum_{n=1}^{\infty}  \meone_n z^{2n},
\end{equation}
with $\meone_n$ counting the meanders on $2n$ bridges, $n \in \bN$.
More precisely, Theorem \ref{thm:12} says that 
``$\firr = R_{\mu}$'', where $\mu : \bC [X] \to \bC$ is the 
symmetric linear functional with 
$\mu (X^{2n}) = C_n^2$, $n \in \bN$;  so we consider the 
analogous equation ``$f_1 = R_{\nu}$'', which is now used as a
definition, for a functional $\nu$.  The fact that a linear
functional on $\bC [X]$ can be defined by prescribing its 
$R$-transform follows immediately from the moment-cumulant
formula (see e.g. Exercise 16.21 in \cite{NS2006}).

$\ $

\begin{notation}   \label{def:52}
We denote as $\nu : \bC [X] \to \bC$ the linear functional with 
$\nu (1) = 1$ and such that $R_{\nu} = f_1$, the series from 
Equation (\ref{eqn:5a}).  That is, $\nu$ is uniquely determined by
the requirement that its free cumulants are 
\begin{equation}    \label{eqn:52a}
\kappa_{2n-1} ( \nu ) = 0 \ \mbox{ and } \ 
\kappa_{2n} ( \nu ) = \meone_n , \ \  \forall \, n \in \bN .
\end{equation}
\end{notation}  

$\ $

In order to give an alternative description of $\nu$ in terms 
of its moments, we introduce the following concept.

$\ $

\begin{definition}   \label{def:53}
Let $n$ be a positive integer, let $\pi , \rho$ be in $NC(n)$, and consider the 
meandric system $M_{\pi , \rho} \in \cS_{2n}$.  Let 
$\sigma := \orb  \bigl( \, M_{\pi , \rho} \, \bigr)$, the partition of 
$\{ 1, \ldots , 2n \}$ into orbits of $M_{\pi , \rho}$.  If $\sigma$ is 
non-crossing, then we will say that the  meandric system $M_{\pi , \rho}$ is 
{\em strictly non-crossing}.

\vspace{6pt}

\noindent
[For a concrete example, the meandric system depicted on the 
left side of Figure 1 is not strictly non-crossing, since it has
$\orb ( M_{\pi , \rho} ) = 
\{ \, \{ 1,2,5,6 \} , \, \{ 3,4,7,8 \} \, \} \not\in NC(8)$.]
\end{definition}  

$\ $

\begin{proposition}   \label{prop:54}
For every $n \in \bN$, the functional $\nu$ introduced in 
Notation \ref{def:52} has $\nu ( X^{2n-1} ) = 0$ and
\begin{equation}    \label{eqn:54a}
\nu ( X^{2n} ) \, = \ \vline \, \{ ( \pi , \rho ) \in NC(n)^{2} \mid 
M_{\pi , \rho} \mbox{ is strictly non-crossing} \} \ \vline \, .
\end{equation}
\end{proposition}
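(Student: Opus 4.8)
The plan is to read off the even moments of $\nu$ from the moment-cumulant formula and then match the resulting expression, term by term, against a block-by-block decomposition of strictly non-crossing meandric systems. That $\nu (X^{2n-1}) = 0$ is immediate: by Equation (\ref{eqn:52a}) all odd free cumulants of $\nu$ vanish, so $\nu$ is symmetric (Definition and Remark \ref{def:41}.4). For the even moments I would apply the moment-cumulant formula (\ref{eqn:41a}) to $\nu$; since $\kappa_{2k-1}( \nu ) = 0$ and $\kappa_{2k}( \nu ) = \meone_k$, only the partitions all of whose blocks have even size contribute, whence
\[
\nu (X^{2n}) \ = \ \sum_{\sigma} \ \prod_{V \in \sigma} \meone_{|V|/2} \, ,
\]
the sum being over all $\sigma \in NC(2n)$ each of whose blocks has even size. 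On the other hand, $M_{\pi , \rho}$ is strictly non-crossing precisely when $\orb ( M_{\pi , \rho} ) \in NC(2n)$ (Definition \ref{def:53}), and any such partition automatically has all its blocks of even cardinality (Remark \ref{rem:32}.1). Grouping the pairs $( \pi , \rho )$ counted on the right-hand side of (\ref{eqn:54a}) according to the value of $\orb ( M_{\pi , \rho} )$, we see that it is enough to prove, for each fixed $\sigma \in NC(2n)$ all of whose blocks have even size, that
\begin{equation*}
\# \bigl\{ ( \pi , \rho ) \in NC(n)^{2} \ : \ \orb ( M_{\pi , \rho} ) = \sigma \bigr\} \ = \ \prod_{V \in \sigma} \meone_{|V|/2} \, . \tag{$\ast$}
\end{equation*}

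I would prove ($\ast$) by induction on $n$, the underlying picture being that a meandric system with a non-crossing orbit partition is assembled, in a tree-like nesting, out of one meander per block of $\sigma$. If $\sigma = 1_{2n}$, then $\orb ( M_{\pi , \rho} ) = \sigma$ just says that $M_{\pi , \rho}$ is a meander, so both sides of ($\ast$) equal $\meone_n$ and there is nothing to prove. If $\sigma \neq 1_{2n}$, then (every non-crossing partition has an interval block) $\sigma$ has a block $V = [a,b] \cap \bZ$ which is a proper subinterval of $\{ 1, \ldots , 2n \}$; being an orbit, $V$ is a union of blocks of $A( \pi )$ and a union of blocks of $A( \rho )$. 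In the ``odd-to-even'' case $a = 2p-1$, $b = 2q$, Lemma \ref{lemma:3.4}.1 gives that $[p,q] \cap \bZ$ is a union of blocks of both $\pi$ and $\rho$; since the doubling construction is local on $[p,q] \cap \bZ$ (whose doubled points are exactly $2p-1, \ldots , 2q$, so that $V$ contains whole doubled points and no parity shift occurs), the pair $( \pi , \rho )$ splits as a pair $( \bar\pi , \bar\rho ) \in NC(q-p+1)^{2}$ supported on $V$ together with a pair $( \pi' , \rho' ) \in NC(n-(q-p+1))^{2}$ supported on the complement, and one checks that $\orb ( M_{\pi , \rho} ) = \sigma$ holds if and only if $M_{\bar\pi , \bar\rho}$ is a meander and $\orb ( M_{\pi' , \rho'} ) = \sigma \setminus \{ V \}$ (suitably relabelled). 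This yields the recursion $\# \{ \orb ( M_{\pi , \rho} ) = \sigma \} = \meone_{|V|/2} \cdot \# \{ \orb ( M_{\pi' , \rho'} ) = \sigma \setminus \{ V \} \}$, and the inductive hypothesis closes this case.

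The step I expect to be the main obstacle is the ``even-to-odd'' case $a = 2p$, $b = 2q-1$, which cannot be avoided --- for instance $\sigma = \{ \, \{ 1,4 \}, \, \{ 2,3 \} \, \} \in NC(4)$ has $[2,3] \cap \bZ$ as its only interval block. Here the two doubled halves $2p-1$ and $2p$ of the point $p$ lie in different blocks of $\sigma$, so $V$ does not consist of whole doubled points, and the restrictions of $A( \pi )$ and $A( \rho )$ to $V$ carry ``a shift by half a point'', so that the naive restrict-and-relabel no longer produces an arch-diagram in the image of the doubling construction. To get past this I would lean on the rigidity imposed by the hypothesis $\sigma \in NC(2n)$: Lemma \ref{lemma:3.4}.2 forces $p$ and $q$ into a common block of $\pi$ and into a common block of $\rho$, and, using that $V$ is a single orbit of $M_{\pi , \rho}$ together with the fact that nothing may cross the block of $\sigma$ surrounding $V$, one shows that after collapsing the stretch $\{ p, p+1, \ldots , q \}$ of $\{ 1, \ldots , n \}$ to a single point the orbit $V$ becomes a bona fide meander, while the rest of $M_{\pi , \rho}$ becomes a meandric system $M_{\pi' , \rho'}$ with $( \pi' , \rho' ) \in NC(n - |V|/2)^{2}$ and $\orb ( M_{\pi' , \rho'} ) = \sigma \setminus \{ V \}$ (suitably relabelled). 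This reduces ($\ast$) for $(n, \sigma)$ to ($\ast$) for $(n - |V|/2, \sigma \setminus \{ V \})$; verifying that the collapse is genuinely a bijection onto the pairs [$( \pi' , \rho' )$ with $\orb = \sigma \setminus \{ V \}$, together with a meander supported on $V$] is where essentially all of the work lies.
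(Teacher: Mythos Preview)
Your reduction to the identity $(\ast)$ matches the paper exactly; the difference is in how $(\ast)$ is established.

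You propose induction on $n$, peeling off one interval block $V$ of $\sigma$ at a time and splitting $(\pi,\rho)$ into a piece on $V$ and a piece on the complement \emph{at the level of $NC(n)$}. This is what forces the odd-to-even versus even-to-odd case distinction: when $V = [2p,2q-1] \cap \bZ$, the block does not consist of whole doubled points, and you rightly observe that the naive restriction of $\pi$ and $\rho$ no longer makes sense.

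The paper sidesteps this entirely by working one level up, with the arch-diagrams $A(\pi), A(\rho) \in NCP(2n)$ rather than with $\pi, \rho \in NC(n)$. For a fixed $\sigma = \{W_1, \ldots, W_k\}$ with all blocks even, each $W_i$ is simultaneously a union of blocks of $A(\pi)$ and of $A(\rho)$; restricting and relabelling yields non-crossing pairings $\sigma_i, \theta_i \in NCP(|W_i|)$. Since the doubling map $NC(m) \to NCP(2m)$ is a \emph{bijection}, one can always write $\sigma_i = A(\pi_i)$, $\theta_i = A(\rho_i)$ for unique $\pi_i, \rho_i \in NC(|W_i|/2)$ --- the parity of $\min W_i$ is irrelevant. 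That $W_i$ is a single orbit of $M_{\pi,\rho}$ translates to $M_{\pi_i,\rho_i}$ being a meander, and the inverse (reassembling meanders on the $W_i$ into a meandric system on $\{1,\ldots,2n\}$) introduces no crossings precisely because $\sigma \in NC(2n)$. This gives the bijection in one stroke, for all blocks at once, with no induction and no case split.

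So your approach can be made to work, but the ``main obstacle'' you identify is an artifact of insisting on decomposing inside $NC(n)$; it evaporates once you decompose inside $NCP(2n)$ and only invoke the doubling bijection afterwards.
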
  

\begin{proof}  The vanishing of odd moments of $\nu$ follows from the 
vanishing of its odd free cumulants, as mentioned in 
Remark \ref{def:41}.4.  Here we fix $n \in \bN$ and we address 
the calculation of $\nu ( X^{2n} )$.  We start from the right-hand 
side of Equation (\ref{eqn:54a}), which we write as
\[
\sum_{\sigma \in NC(2n)}^{ } 
\mid \{ ( \pi , \rho ) \in NC(n)^{2} \mid 
\orb ( M_{\pi , \rho} ) = \sigma \} \mid .
\]
Since all orbits of $M_{\pi , \rho}$ have even cardinality,  
the above summation reduces to
\begin{equation}    \label{eqn:54c}
\sum_{\sigma \in NCE(2n)} \
\mid \{ ( \pi , \rho ) \in NC(n)^{2} \mid 
\orb ( M_{\pi , \rho} ) = \sigma \} \mid ,
\end{equation}
where we denoted
\begin{equation}    \label{eqn:nce}
NCE (2n) := \{ \sigma \in NC(2n) \mid 
\ |W| \mbox{ is even, for all $W \in \sigma$} \} .
\end{equation}

Let us momentarily fix a partition 
$\sigma = \{ W_1, \ldots , W_k \} \in NCE (2n)$.  To every 
$( \pi , \rho ) \in NC(n)^{2}$ such that 
$\orb ( M_{\pi , \rho} ) = \sigma$ we can associate a $k$-tuple of
meanders on $|W_1|$,
respectively $|W_2|, \ldots , \, \mbox{respectively } |W_k|$
bridges, in the way described as follows.  For every $1 \leq i \leq k$,
the set $W_i$ is at the same time a union of blocks of $A( \pi )$ and
a union of blocks of $A( \rho )$.  We can thus consider the restrictions 
$A( \pi ) \mid W_i$ and $A( \rho ) \mid W_i$, which become non-crossing
pairings $\sigma_i, \theta_i \in NCP( |W_i| )$ upon the re-numbering 
of the elements of $W_i$ as $1, \ldots , |W_i|$.  We then write 
$\sigma_i = A( \pi_i )$, 
$\theta_i = A( \rho_i )$ with $\pi_i, \rho_i \in NC( |W_i|/2 )$, 
and we note that $M_{\pi_i, \rho_i}$ is a meander (due to the fact 
that $W_i$ is an orbit of $M_{\pi, \rho}$).

The preceding paragraph has put into evidence a natural map 
\[
( \pi , \rho ) \mapsto \Bigl( \, 
( \pi_1 , \rho_1 ), \ldots , ( \pi_k , \rho_k )  \, \Bigr),
\]
going from $\{ ( \pi , \rho ) \in NC(n)^{2} 
                 \mid \orb ( M_{\pi , \rho} ) = \sigma \}$ to 
\begin{equation}    \label{eqn:54e}
\prod_{i=1}^k 
\{ ( \pi_i , \rho_i ) \in NC( |W_i|/2 )^{2} 
\mid  M_{\pi_i , \rho_i} \mbox{ is a meander} \} .
\end{equation}
This map is in fact a bijection.  Indeed, if we start with a 
$k$-tuple 
$\bigl( \, ( \pi_1 , \rho_1 ), \ldots , ( \pi_k , \rho_k )  \, \bigr)$
from the set in (\ref{eqn:54e}), then every 
$( A( \pi_i ), A( \rho_i ) )$ can be re-numbered into a meander on $W_i$, 
and the $k$ meanders thus created will combine together into a meandric 
system with orbit-partition equal to $\sigma$.  (A detail to be emphasized
at this point is that, when putting together the $k$ meanders, we don't
get any crossings.  This holds because $\sigma$ was picked to be in 
$NC(2n)$.  Indeed, from the fact that the blocks of $\sigma$ don't cross 
it follows that there can't be crossings among the re-numbered $A( \pi_i )$'s,
and likewise for the re-numbered $A( \rho_i )$'s.)

The conclusion of the preceding two paragraphs is that, for a fixed 
$\sigma \in NCE(2n)$, we have a bijection between
$\{ ( \pi , \rho ) \in NC(n)^{2} \mid 
\orb ( M_{\pi , \rho} ) = \sigma \}$ and the set from (\ref{eqn:54e}).
Upon equating cardinalities, we infer that
\begin{equation}    \label{eqn:54f}
\mid \{ ( \pi , \rho ) \in NC(n)^{2} \mid 
\orb ( M_{\pi , \rho} ) = \sigma \} \mid \ = \,
\prod_{W \in \sigma} \, \meone_{ {  }_{ |W|/2 } } .
\end{equation}

We now unfix $\sigma$, and plug the equality (\ref{eqn:54f}) into
(\ref{eqn:54c}), to find that the right-hand side of  
(\ref{eqn:54a}) can be written as 
\[
\sum_{\sigma \in NCE(2n)} \, \Bigl( \, 
\prod_{W \in \sigma} \, \meone_{ {  }_{ |W|/2 } } \, \Bigr) .
\]
By taking into account what are the free cumulants of $\nu$, 
we see that the latter expression equals
\[
\sum_{\sigma \in NC(2n)} \, \Bigl( \, 
\prod_{W \in \sigma} \, \kappa_{ {  }_{ |W| } } ( \nu ) \, \Bigr) ,
\]
which gives $\nu (X^{2n})$, as required.
\end{proof}

$\ $

\begin{remark}   \label{rem:54}
The online encyclopedia of integer sequences gives, following 
the paper \cite{J1999}, the meander numbers $\meone_n$ for 
$1 \leq n \leq 24$ (see www.oeis.org, sequence A005315).
Starting from these values, one can use the moment-cumulant 
formula (\ref{eqn:41a}) in order to calculate
\footnote{ I am grateful to Mathieu Guay-Paquet and Franz 
Lehner for their help with computer-aided calculations.}
the even moments of $\nu$ up to order $48$, as listed in 
Table 1 on the next page.  An interesting problem concerning
these moments is to find non-trivial lower bounds for the 
radius of convergence of the series 
$M_{\nu} (z) = \sum_{n=1}^{\infty} \nu( X^{2n} ) z^{2n}$.
This, in turn, could give non-trivial upper bounds for the 
constant $c^{(1)}$ in Equation (\ref{eqn:1a}), via an argument 
like the one mentioned in Remark \ref{rem:45}.2.

The moments listed in Table 1 show that (unfortunately) $\nu$
is not positive definite -- for instance the determinant of
the matrix $[ \, \nu( X^{2i + 2j} )  \, ]_{0 \leq i,j \leq 9}$
is negative.  It would nevertheless be of interest to pursue 
the study of $\nu$ as an analytic object (as a signed measure,
perhaps).

Another observation about $\nu$ is that it relates to a 
family of functionals which are interesting 
in their own right, and are defined as follows.
\end{remark}

$\ $

\begin{notation}   \label{def:55}

$1^{o}$  For every $n \in \bN$ and $k \in \{ 1, \ldots , n \}$ we 
denote
\[
\mek_n \, := \ \vline \, \{ ( \pi , \rho ) \in NC(n)^{2} \mid 
M_{\pi , \rho} \mbox{ has exactly $k$ orbits} \} \ \vline \ \, .
\]
(For $k=1$, this agrees with the notation $\meone_n$ used since the 
introduction.)

\vspace{6pt}

$2^{o}$  Let $t$ be a parameter in $( 0, \infty )$.  We will denote as 
$\nu_t : \bC [X] \to \bC$ the linear functional with $\nu_t (1) = 1$
and which has moments given by
\begin{equation}   \label{eqn:55b}
\nu_t (X^{2n-1}) = 0 \ \mbox{ and } \ 
\nu_t (X^{2n}) = \sum_{k=1}^n \mek_n t^k, \ \ n \in \bN .
\end{equation} 
\end{notation}

$\ $

\begin{remark}   \label{rem:56}
In order to state, in the next proposition, the connection between $\nu$
and the $\nu_t$'s, let us review some more (rather standard) bits of 
terminology.

\vspace{6pt}

{\em (a) $\boxplus$-powers.}
Let $\mu : \bC [X] \to \bC$ be linear with $\mu (1) = 1$, and let $t$ be
in $( 0, \infty )$.  We denote as $\mu^{\boxplus t}$ the linear functional
$\widetilde{\mu} : \bC [X] \to \bC$ which has $\widetilde{\mu} (1) = 1$ and 
is uniquely determined by the requirement that 
$R_{\widetilde{\mu}} (z) = t R_{\mu} (z)$.  The exponential notation 
$\mu^{\boxplus t}$ is meaningful in connection to 
the operation $\boxplus$ of free additive convolution, see for
instance pp. 231-233 in Lecture 14 of \cite{NS2006}.

\vspace{6pt}

{\em (b) Convergence in moments.} 
Let $( \mu_t )_{t \in (0, \infty )}$ and $\mu$ be linear maps from $\bC [X]$ 
to $\bC$, which send $1$ to $1$.   We will write
\begin{equation}   \label{eqn:56a}
\mbox{``$\lim_{t \to 0} \mu_t = \mu$,  in moments''}
\end{equation} 
to mean that $\lim_{t \to 0} \mu_t (X^n) = \mu (X^n)$ 
for all $n \in \bN$.  Upon invoking the 
moment-cumulant formula (\ref{eqn:41a}) it is immediate that, 
equivalently, one can define (\ref{eqn:56a}) via the requirement 
that $\lim_{t \to 0} \kappa_n ( \mu_t ) = \kappa_n ( \mu )$ 
for all $n \in \bN$.
\end{remark} 

$\ $

$\ $

\begin{center}
\begin{tabular}{|c|l|l|l|}
\hline
     &                &           &                         \\
$n$  &  Free cumulant &  Moment   &  Ratio                  \\
     &  $\kappa_{2n} ( \nu ) = \meone_n$  & 
        $\nu( X^{2n} ) $                  &
        $\nu ( X^{2n} ) / C_n^2$                            \\
     &                &           &           \\   \hline
     &                &           &           \\   

1  & 1    & 1            & 1.00000           \\
2  & 2    & 4            & 1.00000           \\
3  & 8    & 25           & 1.00000           \\
4  & 42   & 192          & 0.97959           \\
5  & 262  & 1664         & 0.94331           \\
6  & 1828 & 15626        & 0.89681           \\
7  & 13820        & 155439            & 0.84459     \\
8  & 110954       & 1615208           & 0.78987     \\
9  & 933458       & 17371372          & 0.73486     \\
10 & 8152860      & 192116692         & 0.68101     \\
11 & 73424650     & 2174556080        & 0.62925     \\
12 & 678390116    & 25101780538       & 0.58013     \\
13 & 6405031050   & 294692569630      & 0.53396     \\
14 & 61606881612  & 3510877767198     & 0.49085     \\
15 & 602188541928 & 42371895120585    & 0.45081     \\

16 & 5969806669034         & 517281396522616          & 0.41377  \\
17 & 59923200729046        & 6380271752428956         & 0.37960  \\
18 & 608188709574124       & 79428025047086276        & 0.34816  \\
19 & 6234277838531806      & 997137221492794404       & 0.31926  \\
20 & 64477712119584604     & 12614196796924143524     & 0.29276  \\
21 & 672265814872772972    & 160696941192856063186    & 0.26845  \\
22 & 7060941974458061392   & 2060412248079723985072   & 0.24619  \\
23 & 74661728661167809752  & 26575640310738797507800  & 0.22581  \\
24 & 794337831754564188184 & 344671815256362419882958 & 0.20715  \\
   &                       &                    &      \\ \hline

\end{tabular}

$\ $

$\ $

{\bf Table 1.}  
{\em Even free cumulants and even moments
of the linear functional $\nu$, up

to order 48.  The rightmost column of the table shows the 
probability that a 

random
meandric system on $2n$ bridges is strictly non-crossing,
for $1 \leq n \leq 24$.}
\end{center}

\newpage

\begin{proposition}   \label{prop:57}
One has
$\lim_{t \to 0} \nu_t^{\boxplus \, 1/t} = \nu , \mbox{ in moments,}$
where $\nu_t$ and $\nu$ are as in Notations \ref{def:55} and 
\ref{def:52}, respectively.
\end{proposition}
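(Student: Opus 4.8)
The plan is to transfer the statement from moments to free cumulants, where it becomes almost tautological, and then to settle the surviving combinatorial identity by a short induction on $n$.  First I would invoke the description of $\boxplus$-powers from Remark \ref{rem:56}(a): the functional $\nu_t^{\boxplus\, 1/t}$ is characterised by $R_{\nu_t^{\boxplus\, 1/t}} = \tfrac{1}{t}\, R_{\nu_t}$, equivalently by
\[
\kappa_m\bigl( \nu_t^{\boxplus\, 1/t} \bigr) \ = \ \frac{1}{t}\, \kappa_m( \nu_t ), \qquad \forall\, m \in \bN .
\]
By the reformulation of convergence in moments recorded at the end of Remark \ref{rem:56}(b), proving ``$\lim_{t\to 0} \nu_t^{\boxplus\, 1/t} = \nu$ in moments'' amounts to proving that $\lim_{t\to 0} \tfrac{1}{t}\,\kappa_m(\nu_t) = \kappa_m(\nu)$ for every $m \in \bN$.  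For odd $m = 2n-1$ there is nothing to check: $\nu_t$ is symmetric by its definition (\ref{eqn:55b}), so $\kappa_{2n-1}(\nu_t) = 0$, and $\kappa_{2n-1}(\nu) = 0$ by (\ref{eqn:52a}).  So the whole content of the proposition is the assertion that
\[
\lim_{t\to 0}\ \frac{1}{t}\,\kappa_{2n}(\nu_t) \ = \ \meone_n , \qquad \forall\, n \in \bN ,
\]
the right-hand side being $\kappa_{2n}(\nu)$ in view of (\ref{eqn:52a}).

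I would establish this by strong induction on $n$.  The base case $n = 1$ is immediate, since $\kappa_2(\nu_t) = \nu_t(X^2) - \nu_t(X)^2 = \meone_1 \, t$ by (\ref{eqn:55b}), whence $\tfrac{1}{t}\,\kappa_2(\nu_t) = \meone_1$ identically in $t$.  For the inductive step, I would apply the moment-cumulant formula (\ref{eqn:41a}) to $\nu_t$ and use that $\nu_t$ is symmetric to restrict the sum to partitions all of whose blocks have even cardinality, i.e. to $\sigma \in NCE(2n)$ (notation as in (\ref{eqn:nce})):
\[
\nu_t(X^{2n}) \ = \ \sum_{\sigma \in NCE(2n)} \ \prod_{W \in \sigma} \kappa_{|W|}(\nu_t) .
\]
Isolating the one-block partition $\sigma = 1_{2n}$, this becomes
\[
\kappa_{2n}(\nu_t) \ = \ \nu_t(X^{2n}) \ - \ \sum_{\substack{\sigma \in NCE(2n) \\ |\sigma| \geq 2}} \ \prod_{W \in \sigma} \kappa_{|W|}(\nu_t) .
\]
Dividing by $t$ and letting $t \to 0$: by (\ref{eqn:55b}) one has $\tfrac{1}{t}\,\nu_t(X^{2n}) = \meone_n + \sum_{k=2}^{n} \mek_n \, t^{k-1} \to \meone_n$, while for each $\sigma$ with $k := |\sigma| \geq 2$ every block $W$ has even cardinality with $|W| < 2n$, so the induction hypothesis yields $\kappa_{|W|}(\nu_t) = t\bigl( \meone_{|W|/2} + o(1) \bigr)$; hence $\prod_{W \in \sigma} \kappa_{|W|}(\nu_t) = O(t^{k})$, which after division by $t$ is $O(t^{k-1}) = O(t) \to 0$ because $k \geq 2$.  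Therefore $\tfrac{1}{t}\,\kappa_{2n}(\nu_t) \to \meone_n$, which closes the induction and proves the proposition.

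The argument is little more than bookkeeping with powers of $t$; the one point that needs attention is that the lower cumulants must be controlled by a genuine $O(t)$ estimate and not merely by convergence to $0$ -- which is exactly what the inductive hypothesis supplies.  An equivalent packaging, avoiding the induction, is to note first that each moment $\nu_t(X^m)$ is a polynomial in $t$ divisible by $t$, whence by M\"obius inversion on $NC(n)$ each $\kappa_{2n}(\nu_t)$ is a polynomial in $t$ with vanishing constant term; then comparing the coefficients of $t^1$ on the two sides of the displayed moment-cumulant identity -- only $\sigma = 1_{2n}$ contributes at order $t^1$ on the right-hand side -- gives $\kappa_{2n}(\nu_t) = \meone_n \, t + O(t^2)$ directly.
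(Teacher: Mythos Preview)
Your proof is correct and follows essentially the same route as the paper's: reduce to free cumulants via Remark~\ref{rem:56}, handle the odd case by symmetry, and for the even case isolate $\sigma = 1_{2n}$ in the moment--cumulant formula restricted to $NCE(2n)$, then use strong induction to control the remaining terms.  The only cosmetic difference is that the paper packages the induction hypothesis as ``$\kappa_{2p}(\nu_t)$ is a polynomial $Q_p \in \bZ[t]$ with $Q_p(0)=0$ and $Q_p'(0)=\meone_p$'', which is exactly the alternative formulation you sketch in your final paragraph.
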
 

\begin{proof}  We will prove the convergence of free cumulants,
\begin{equation}   \label{eqn:57a}
\lim_{t \to 0} \kappa_n ( \nu_t^{\boxplus 1/t} ) = \kappa_n ( \nu ), 
\ \ \forall \, n \in \bN .
\end{equation} 
For $n$ oddd, (\ref{eqn:57a}) holds trivially, because $\nu$ and the 
$\nu_t$'s are symmetric functionals.  For $n$ even, $n= 2p$, the 
limit in (\ref{eqn:57a}) amounts to 
\begin{equation}   \label{eqn:57b}
\lim_{t \to 0}  \frac{1}{t} \kappa_{2p} ( \nu_t ) = \meone_p .
\end{equation} 
We will obtain this as a consequence of the following stronger claim.

\vspace{6pt}

{\em Claim.} For every $p \in \bN$, there exists a polynomial 
$Q_p \in \bZ [t]$, with $Q_p (0) = 0$ and $Q_p ' (0) = \meone_p$,
such that $\kappa_{2p} (\nu_t ) = Q_p (t)$ for all $t \in ( 0, \infty )$.

{\em Verification of Claim.} By induction on $p$.  For $p=1$ we have
$\kappa_2 ( \nu_t ) = \nu_t (X^2) - \nu_t (X)^2 = t$, hence we can
take $Q_1 (t) = t = \meone_1 t$.

Induction step: we fix $p \geq 2$ and we verify the claim for this $p$,
by assumming it was already verified for $1, \ldots , p-1$.  For every
$t \in ( 0, \infty )$, the moment-cumulant formula says that 
\[
\nu_t (X^{2p}) =  \sum_{\sigma \in NC(2p)} \, \Bigl( 
\, \prod_{W \in \sigma} \kappa_{ { }_{|W|} } ( \nu_t ) \, \Bigr).
\]
Since $\nu_t$ is symmetric, the latter sum has in fact only contributions
from partitions in $NCE(2p)$ (same notation as in Equation (\ref{eqn:nce})
from the proof of Proposition \ref{prop:54}).  By separating the term which 
corresponds to $\sigma = 1_{2p}$, we find that
\begin{equation}   \label{eqn:57c}
\kappa_{2p} ( \nu_t ) = \nu_t (X^{2p}) -  
\sum_{ \begin{array}{c}
{ \scriptstyle \sigma \in NCE(2p) }  \\
{ \scriptstyle \sigma \neq 1_{2p} } 
\end{array} } \ \Bigl( 
\, \prod_{W \in \sigma} \kappa_{ { }_{|W|} } ( \nu_t ) \, \Bigr).
\end{equation}
The induction hypothesis allows us to replace the sum which is subtracted in
(\ref{eqn:57c}) with
\[
\sum_{\begin{array}{c}
{ \scriptstyle \sigma \in NCE(2p) }  \\
{ \scriptstyle \sigma \neq 1_{2p} } 
\end{array} } \ \Bigl( 
\, \prod_{W \in \sigma} Q_{ { }_{|W|/2} } ( t ) \, \Bigr) =: U(t),
\]
where $U \in \bZ [t]$ has $U(0) = U' (0) = 0$.  If on the right-hand side of
(\ref{eqn:57c}) we also substitute 
$\nu_t (X^{2p}) = \meone_p t + \sum_{k=2}^p \mek_p t^k$, it 
clearly follows that $\kappa_{2p} ( \nu_t )$ has indeed the form required
by the claim.
\end{proof}

$\ $

\begin{remark}   \label{rem:58}
It is natural to ask: for what values of $t$ is $\nu_t$ positive 
definite?  Proposition \ref{prop:57} shows this cannot hold for
$t \to 0$ (if there would exist a sequence $t_n \to 0$ with 
$\nu_{t_n}$ positive definite, then it would follow that $\nu$ is 
positive definite as well).  On the other hand, there are values 
of $t \geq 1$ for which $\nu_t$ is sure to be positive definite 
because it admits an {\em operator model} (that is, it arises as 
scalar spectral measure for a selfadjoint operator on Hilbert space). 
The largest known set of such $t$'s appears to be
$\{ 2 \cos \frac{\pi}{n} \mid n \geq 3 \} \cup [ 2, \infty )$;
for $t$ in this set, an operator model for $\nu_t$ is described in 
\cite{CJS2014} (see discussion preceding Proposition 3.1 of that 
paper).  Some other operator models (or random matrix models) for 
$\nu_t$ are known in the special case when $t \in \bN$: see 
Section 4 of \cite{FS2013}, or Section 4 of the physics paper 
\cite{M1995}; the latter model is also described in Section 6.2
of \cite{D2001}.  

The next proposition presents a version of the model from 
\cite{M1995}, \cite{D2001}, which is placed in  the framework of 
a $*$-probability space (that is, $\cA$ is a unital $*$-algebra
over $\bC$, and $\varphi : \cA \to \bC$ is linear with 
$\varphi ( \unitA ) = 1$ and $\varphi ( a^{*}a ) \geq 0$ for all 
$a \in \cA$).  The interesting point of the proposition is that 
it involves tensors products of elements from a free family -- 
a mixture of classical and free probability which may provide a 
good setting for further study of meandric systems.
\end{remark}

$\ $

\begin{proposition}   \label{prop:59}
Let $d$ be a positive integer.  Suppose that $a_1, \ldots , a_d$ is a 
free family of selfadjoint elements in a $*$-probability space 
$( \cA , \varphi )$, such that every $a_i$ ($1 \leq i \leq d$) has 
centred semicircular distribution of variance $1$.  Consider the 
$*$-probability space 
$( \cA \otimes \cA , \varphi \otimes \varphi)$, and the selfadjoint
element
\begin{equation}   \label{eqn:59a}
x = a_1 \otimes a_1  + a_2 \otimes a_2 + \cdots + a_d \otimes a_d 
\in \cA \otimes \cA .
\end{equation}
Then $x$ has distribution $\nu_d$ with 
respect to $\varphi \otimes \varphi$.
\end{proposition}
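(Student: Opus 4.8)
The plan is to compute the moments $(\varphi \otimes \varphi)(x^{m})$ directly and match them with the moments prescribed for $\nu_d$ in Notation~\ref{def:55}. First I would expand
\[
x^m = \sum_{i_1, \ldots , i_m = 1}^{d} \bigl( a_{i_1} \cdots a_{i_m} \bigr) \otimes \bigl( a_{i_1} \cdots a_{i_m} \bigr)
\]
and use the multiplicativity of $\varphi \otimes \varphi$ on elementary tensors to obtain $(\varphi \otimes \varphi)(x^m) = \sum_{i_1, \ldots , i_m = 1}^{d} \varphi( a_{i_1} \cdots a_{i_m} )^{2}$. When $m$ is odd every summand vanishes (an odd-order moment of a semicircular family is $0$), so $(\varphi \otimes \varphi)(x^{2n-1}) = 0$, matching $\nu_d(X^{2n-1}) = 0$; thus only the case $m = 2n$ requires work. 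Selfadjointness of $x$ is immediate from selfadjointness of each $a_i$ together with the tensor $*$-structure on $\cA \otimes \cA$.

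For $m = 2n$ I would invoke the standard mixed-moment formula for a free -- hence jointly semicircular -- family of centred semicirculars of variance $1$ (see e.g. Lecture~8 of \cite{NS2006}), whose covariance is $\varphi(a_i a_j) = \delta_{i,j}$: namely
\[
\varphi( a_{i_1} \cdots a_{i_{2n}} ) = \sum_{\sigma \in NCP(2n)} \ \prod_{\{p,q\} \in \sigma} \delta_{i_p, i_q},
\]
so that $\varphi( a_{i_1} \cdots a_{i_{2n}} )$ is the number of non-crossing pairings $\sigma$ of $\{ 1, \ldots , 2n \}$ for which the colouring $p \mapsto i_p$ is constant on every block of $\sigma$. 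Squaring this and summing over all colourings $(i_1, \ldots , i_{2n}) \in \{ 1, \ldots , d \}^{2n}$ gives $(\varphi \otimes \varphi)(x^{2n}) = \sum_{\sigma , \theta \in NCP(2n)} N_{\sigma, \theta}$, where $N_{\sigma, \theta}$ counts the colourings of $\{ 1, \ldots , 2n \}$ by $\{ 1, \ldots , d \}$ that are constant on every block of $\sigma$ and on every block of $\theta$. Such colourings are exactly those constant on every connected component of the graph on $\{ 1, \ldots , 2n \}$ whose edges are the pairs of $\sigma$ together with the pairs of $\theta$; hence $N_{\sigma, \theta} = d^{\, | \sigma \, \widetilde{\vee} \, \theta |}$, where $\sigma \, \widetilde{\vee} \, \theta$ denotes the join taken in the lattice $\cP(2n)$ of all partitions of $\{ 1, \ldots , 2n \}$.

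It then remains to pass through the doubling construction: writing $\sigma = A( \pi )$ and $\theta = A( \rho )$ with $\pi , \rho \in NC(n)$ (Notation and Remark~\ref{def:24}), I must identify $| A( \pi ) \, \widetilde{\vee} \, A( \rho ) |$ with the number of components $\#( M_{\pi , \rho} )$. This is the one step that carries genuine content. By Remark~\ref{rem:32}.1, a set $S \subseteq \{ 1, \ldots , 2n \}$ is invariant under $M_{\pi , \rho}$ precisely when it is at the same time a union of blocks of $A( \pi )$ and a union of blocks of $A( \rho )$; but these are exactly the sets that are unions of blocks of $A( \pi ) \, \widetilde{\vee} \, A( \rho )$ (a partition is determined by which sets are unions of its blocks, and for a join in $\cP(2n)$ those are precisely the unions of connected components of the graph described above). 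Since the minimal nonempty invariant sets of a permutation are precisely its orbits, this forces $\orb( M_{\pi , \rho} ) = A( \pi ) \, \widetilde{\vee} \, A( \rho )$, hence $\#( M_{\pi , \rho} ) = | A( \pi ) \, \widetilde{\vee} \, A( \rho ) |$. Substituting this and regrouping the double sum over $NC(n)^2$ according to the value $k = \#( M_{\pi , \rho} )$ yields
\[
(\varphi \otimes \varphi)(x^{2n}) = \sum_{\pi , \rho \in NC(n)} d^{\, \#( M_{\pi , \rho} )} = \sum_{k=1}^{n} \mek_n \, d^{\, k} = \nu_d(X^{2n}),
\]
which is exactly the defining relation (\ref{eqn:55b}) for $\nu_d$. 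The main obstacle, such as it is, is the lattice-theoretic identity $\orb( M_{\pi , \rho} ) = A( \pi ) \, \widetilde{\vee} \, A( \rho )$; once that is available, the rest is the semicircular Wick formula together with an elementary count of colourings constant on connected components.
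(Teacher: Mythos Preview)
Your proof is correct and follows essentially the same route as the paper: expand $x^{2n}$, apply the Wick/moment-cumulant formula for a free semicircular family to get $\varphi(a_{i_1}\cdots a_{i_{2n}}) = |\{\sigma \in NCP(2n) : \sigma \leq \ker i\}|$, swap the order of summation, and then count colourings. The only cosmetic difference is that you pass through the identity $\orb(M_{\pi,\rho}) = A(\pi)\,\widetilde{\vee}\,A(\rho)$ explicitly (via Remark~\ref{rem:32}.1), whereas the paper states directly that ``$\sigma \leq \ker i$ and $\theta \leq \ker i$'' is equivalent to $i$ being constant on the orbits of $M_{\pi,\rho}$; these are the same observation.
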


\begin{proof}  The conclusion of the proposition amounts to the 
fact that for every $n \in \bN$ one has
\begin{equation}  \label{eqn:59b}
( \varphi \otimes \varphi ) (x^{2n-1}) = 0
\ \mbox{ and } \
( \varphi \otimes \varphi ) (x^{2n}) = 
\sum_{k=1}^n \mek_n d^k.
\end{equation}
Throughout the proof we fix an $n \in \bN$ for which we will 
verify the second formula (\ref{eqn:59b}) (the easy verification 
that $( \varphi \otimes \varphi ) (x^{2n-1}) = 0$ is
left to the reader).

We start by expanding $( a_1 \otimes a_1  + a_2 \otimes a_2 
+ \cdots + a_d \otimes a_d )^{2n}$, and by applying 
$\varphi \otimes \varphi$ to the result, to get
\begin{equation}  \label{eqn:59c}
( \varphi \otimes \varphi ) (x^{2n}) = 
\sum_{i(1), \ldots , i(2n) =1}^d
\Bigl( \, \varphi ( a_{i(1)} \cdots a_{i(2n)} ) \, \Bigr)^2.
\end{equation}

Let us momentarily fix a $(2n)$-tuple 
$(i(1), \ldots , i(2n)) \in \{ 1, \ldots , d \}^{2n}$.  The 
moment-cumulant formula for several variables (for which we refer to
Lecture 11 of \cite{NS2006}) expresses the moment 
$\varphi ( a_{i(1)} \cdots a_{i(2n)} )$ as a certain summation over 
$NC(2n)$,
\begin{equation}  \label{eqn:59d}
\varphi ( a_{i(1)} \cdots a_{i(2n)} ) = \sum_{\sigma \in NC(2n)} \
\term_{\sigma}.
\end{equation}  
Due to the free independence of $a_1, \ldots , a_d$
and to the special form of the free cumulants of the $a_i$'s (namely
$\kappa_2 (a_i) = 1$ and $\kappa_p (a_i) = 0$ for $p \neq 2$), it
turns out that in (\ref{eqn:59d}) we always have 
$\term_{\sigma} \in \{ 0, 1 \}$, with
\[ 
( \term_{\sigma} = 1 ) \ \Leftrightarrow \ \left(
\begin{array}{c} 
\mbox{$\sigma \in NCP (2n)$, and for every }  \\
\mbox{$W = \{ p,q \} \in \sigma$ one has $i(p) = i(q)$ }
\end{array}   \right)  .
\]
It comes in handy to introduce here a notation, 
say ``$\sigma \leq \ker i$'' to mean 
\footnote{ It is customary to denote by $\ker i$ the 
partition of $\{ 1, \ldots , 2n \}$ defined via the 
requirement that for $1 \leq p,q \leq 2n$ one has:
``($p,q$ belong to the same block of $\ker i$) 
$\Leftrightarrow$ $i(p) = i(q)$''.
The notation ``$\sigma \leq \ker i$'' can thus be construed as 
an inequality with respect to the reverse refinement order 
(cf. Definition \ref{def:21}.3, Remark \ref{rem:35}) on the set 
$\cP (2n)$ of all partitions of $\{ 1, \ldots , 2n \}$. }
that $\sigma$ is in $NCP(2n)$ 
and fulfills the compatibility condition 
$(W = \{ p,q \} \in \sigma ) \Rightarrow i(p)= i(q)$.  With this 
notation, (\ref{eqn:59d}) becomes
\begin{equation}  \label{eqn:59e}
\varphi ( a_{i(1)} \cdots a_{i(2n)} ) 
\, = \ \mid \{ \sigma \in NCP(2n)  \mid 
       \sigma \leq \ker i  \} \mid .
\end{equation}  

We now unfix $( i(1), \ldots  , i(2n) )$ and return to Equation
(\ref{eqn:59c}).  We find that
\[
( \varphi \otimes \varphi ) (x^{2n}) = 
\sum_{i(1), \ldots , i(2n) =1}^d \ \mid
\Bigl\{ ( \sigma, \theta ) \in NCP (2n)^2 
\mid  \sigma \leq \ker i \mbox{ and } \theta \leq \ker i 
\Bigr\} \mid 
\]
\begin{equation}  \label{eqn:59f}
= \sum_{\sigma , \theta \in NCP(2n)} \
\mid \{ ( i(1), \ldots  , i(2n) ) \in \{ 1, \ldots ,d \}^{2n} 
\mid \sigma \leq \ker i \mbox{ and } \theta \leq \ker i 
\Bigr\} \mid ,
\end{equation}
where (\ref{eqn:59f}) is obtained via change of order of summation 
in the suitable sum of $0$'s and $1$'s indexed by the aggregated
$\sigma, \theta$ and $( i(1), \ldots  , i(2n) )$.

Let us now momentarily fix $\sigma, \theta \in NCP(2n)$, which we write 
as $A( \pi )$ and respectively $A( \rho )$, with $\pi , \rho \in NC(n)$.
It is immediate that for a tuple 
$( i(1), \ldots  , i(2n) ) \in \{ 1, \ldots ,d \}^{2n}$, the condition
``$\sigma \leq \ker i$ and $\theta \leq \ker i$''
is equivalent to asking that 
$i : \{ 1, \ldots , 2n \} \to \{ 1, \ldots ,d \}$ is constant along 
the orbits of the permutation $M_{\pi , \rho}$.  This clearly implies
\begin{equation}   \label{eqn:59g}
\vline \,  \{ ( i(1), \ldots  , i(2n) ) \mid
\sigma \leq \ker i \mbox{ and }
\theta \leq \ker i \}  \, \vline \ 
= d^{\# ( M_{\pi , \rho} ) }.
\end{equation}

We finally let $\sigma, \theta$ run in $NCP(2n)$ (equivalently, we 
let $\pi , \rho$ run in $NC(n)$) and we replace 
(\ref{eqn:59g}) into (\ref{eqn:59f}), to obtain
\begin{align*}
( \varphi \otimes \varphi ) (x^{2n}) 
& = \sum_{\pi , \rho \in NC(n)} \ d^{\# ( M_{\pi , \rho} ) }       \\
& = \sum_{k=1}^n \mid \{ ( \pi , \rho ) \in NC(n)^2 
\mid \# ( M_{\pi , \rho} ) = k \} \mid  \cdot d^k                  \\
& = \sum_{k=1}^n  \mek_n d^k ,
\end{align*}
as had to be proved.
\end{proof}

$\ $

\begin{remark}    \label{rem:510}
I am grateful to Roland Speicher for bringing to my attention 
the following fact: one can easily adjust the proof of 
Proposition \ref{prop:59} in order to find combinatorial 
interpretations for the moments (with respect to 
$\varphi \otimes \varphi$) of more general elements of the form 
$a_1 \otimes b_1 + \cdots + a_d \otimes b_d \in \cA \otimes \cA$,
where each of $a_1, \ldots , a_d$ and $b_1, \ldots , b_d$ is a 
free family of elements of $( \cA , \varphi )$.  Here are two 
nice examples obtained on these lines.

\vspace{6pt}

(a) Let $( \cA , \varphi )$ and $a_1, \ldots , a_d \in \cA$
be exactly as in Proposition \ref{prop:59}, and let us put
\[
y := a_1^2 \otimes a_1^2 + \cdots + a_d^2 \otimes a_d^2 
\in \cA \otimes \cA .
\]
It is known that $a_i^2$ has free cumulants 
$\kappa_p (a_i^2) = 1$ for all $p \in \bN$.  By using this fact 
and by repeating the method of calculation from the proof of 
Proposition \ref{prop:59}, one finds that 
\begin{equation}  \label{eqn:510a}
( \varphi \otimes \varphi ) (y^n) = 
\sum_{\pi, \rho \in NC(n)} 
d^{ | \pi \widetilde{\vee} \rho | } 
 , \ \ n \in \bN ,
\end{equation}  
where ``$\widetilde{\vee}$'' is the join operation for the 
lattice $\cP (n)$ (cf. Remark \ref{rem:35}).  The occurrence 
of the operation $\widetilde{\vee}$ in connection to partitions 
from $NC(n)$ may seem a bit strange, but matrices of the form 
$\bigl[ \, q^{ | \pi \widetilde{\vee} \rho | } 
\, \bigr]_{\pi, \rho \in NC(n)}$ 
do appear in the research literature -- see e.g. \cite{J1994}.

\vspace{6pt}

(b) With $a_1, \ldots , a_d$ still being exactly as in 
Proposition \ref{prop:59}, let us put
\[
z := a_1 \otimes a_1^2 + \cdots + a_d \otimes a_d^2 
\in \cA \otimes \cA .
\]
It is immediate that one has 
$( \varphi \otimes \varphi ) (z^{2n-1}) = 0$ for all $n \in \bN$.
For the even moments of $z$, the method of calculation from the 
proof of Proposition \ref{prop:59} (and the combined knowledge 
of the free cumulants of $a_i$ and $a_i^2$) leads to the formula
\begin{equation}  \label{eqn:510b}
( \varphi \otimes \varphi ) (z^{2n}) = 
\sum_{ \begin{array}{c}
{\scriptstyle \sigma \in NC(2n)}   \\
{\scriptstyle \theta \in NCP(2n)}  
\end{array} } \
d^{ | \sigma \widetilde{\vee} \theta | } 
 , \ \ n \in \bN ,
\end{equation}  
a version of (\ref{eqn:510a}) which now mixes together 
non-crossing pairings with general non-crossing partitions 
from $NC(2n)$.
\end{remark} 

$\ $

$\ $

\noindent
{\bf\Large Acknowledgement}

\vspace{4pt}

\noindent
This paper was written while I was on a long-term visit to MAP5,
the Applied Mathematics Department of Universit\'e Paris 5.
I acknowledge the hospitality of MAP5, and many interesting 
discussions with Florent Benaych-Georges,
Thierry Cabanal-Duvillard and Camille Male
around the topic of the paper.

$\ $

$\ $

$\ $

$\ $

Alexandru Nica

Department of Pure Mathematics, University of Waterloo,

Waterloo, Ontario N2L 3G1, Canada.

Email: anica@uwaterloo.ca

\end{document}